\newtheorem{thm}{Theorem}[section]
\newtheorem{lem}[thm]{Lemma}
\newtheorem{cor}[thm]{Corollary}
\theoremstyle{definition}
\theoremstyle{definition}
\newtheorem{df}[thm]{Definition}
\theoremstyle{definition}
\newtheorem{rem}[thm]{Remark}
\theoremstyle{definition}
\newcommand{\red}{\textcolor{red}}
\newcommand{\blue}{\color{blue}}
\renewcommand{\phi}{\varphi}
\newcommand{\N}{\mathbb{N}}
\newcommand{\Q}{\mathbb{Q}}
\newcommand{\C}{\mathbb{C}}
\newcommand{\T}{\mathbb{T}}
\newcommand{\Aff}{\operatorname{Aff}}
\newcommand{\id}{\operatorname{id}}
\newcommand{\hm}{homomorphism}
\newcommand{\dt}{\delta}
\newcommand{\ep}{\epsilon}
\newcommand{\norm}[1]{\left\Vert#1\right\Vert}
\newcommand{\Ratn}{\mathbb Q}
\newcommand{\eps}{\epsilon}
\newcommand{\Kzero}{\mathrm{K}_0}
\newcommand{\Kone}{\mathrm{K}_1}
\newcommand{\aff}{\mathrm{Aff}}
\newcommand{\andeqn}{\,\,\,{\rm and}\,\,\,}
\newcommand{\rforal}{\,\,\,{\rm for\,\,\,all}\,\,\,}
\newcommand{\CA}{$C^*$-algebra}
\newcommand{\af}{{\alpha}}
\newcommand{\wilog}{without loss of generality}
\newcommand{\D}{\mathbb D}
\newcommand{\beq}{\begin{eqnarray}}
\newcommand{\eneq}{\end{eqnarray}}
\newcommand{\p}{\mathfrak{p}}
\newcommand{\q}{\mathfrak{q}}
\numberwithin{equation}{section}
\begin{document}

\title{On the classification of  simple amenable C*-algebras with finite
decomposition rank, II
}
\author{George A. Elliott, Guihua Gong,  Huaxin Lin, and Zhuang Niu
 }
\date{}

\maketitle

\begin{abstract}
We prove that every unital simple separable  C*-algebra $A$ with finite decomposition rank which satisfies the UCT
has the property
that $A\otimes Q$ has generalized tracial rank at most one, where $Q$ is the universal UHF-algebra. Consequently,
$A$ is classifiable in the sense of Elliott.


\end{abstract}

\maketitle

\section{Introduction}

In a recent development in the Elliott program, the program of classification of amenable C*-algebras,
a certain class of finite unital simple separable amenable
C*-algebras, denoted by ${\mathcal N}_1$,  was shown to be classified by the Elliott invariant (\cite{GLNI} and \cite{GLNII}).
One important feature of this class of C*-algebras is that it exhausts all possible values of the Elliott invariant {{for}}
unital simple separable   C*-algebras which have finite decomposition rank (a property introduced in \cite{KW}; see Definition \ref{DefDr} below).


The purpose of this note is to show that, in fact, every unital simple separable (non-elementary) C*-algebra which has finite decomposition rank and satisfies the Universal Coefficient Theorem (UCT) is in the
class ${\mathcal N}_1.$ Since  every C*-algebra in ${\cal N}_1$ was shown in \cite{GLNI}   and \cite{GLNII} to be isomorphic to the inductive limit of a sequence of subhomogeneous {{C*}}-algebras with no dimension growth, the C*-algebras in ${\cal N}_1$ have
finite decomposition rank {{(see Remark \ref{Refiniterank} below).}}
In other words, the
class ${\mathcal N}_1$ is precisely the class of all unital simple separable (non-elementary) C*-algebras which have finite decomposition rank and satisfy the UCT, and hence we obtain a classification for all of these C*-algebras:

\begin{thm}\label{mainthm-dr}
Let $A$ be a unital simple separable  (non-elementary) C*-algebra with finite decomposition rank, and assume that $A$
satisfies  the UCT. Then $A\in\mathcal N_1.$ (See Definition \ref{Dnotation-1}, below.)
Hence (by {{Theorem 29.8}} of \cite{GLNII}), if $A$ and $ B$ are two (non-elementary) unital  simple separable  C*-algebras with
finite decomposition rank which satisfy the UCT, then $A\cong B$ if, and only if,
$$
(\Kzero(A), \Kzero(A)_+, [1_A]_0, \Kone(A), {\rm T}(A), r_A)\cong
(\Kzero(B), \Kzero(B)_+, [1_B]_0, \Kone(B), {\rm T}(B), r_B).
$$
\end{thm}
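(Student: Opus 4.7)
The plan is to establish the key technical statement highlighted in the abstract: that $A\otimes Q$ has generalized tracial rank at most one. Once this is done, membership of $A$ in $\mathcal{N}_1$ follows from the definition of that class (Definition \ref{Dnotation-1}), and the isomorphism portion of the theorem is then immediate from the main classification result of \cite{GLN}.

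I would begin by recording the structural consequences of finite decomposition rank that will be used throughout. Since finite decomposition rank implies finite nuclear dimension, Winter's absorption theorem gives $A\cong A\otimes\mathcal Z$, whence $A$ has strict comparison of positive elements and its Cuntz semigroup is well-behaved. Finite decomposition rank also forces $A$ to be quasidiagonal, and together with nuclearity and UCT, the Tikuisis--White--Winter theorem implies that every trace of $A$ is quasidiagonal in the sense of Brown. Tensoring with $Q$ preserves all of these properties, so $A\otimes Q$ is simple, separable, nuclear, $Q$-stable, has strict comparison and only quasidiagonal traces, and satisfies the UCT; it is this algebra to which the tracial approximation construction will be applied.

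The heart of the argument is the construction, given any finite set $F\sbs A\otimes Q$ and any $\eps>0$, of a unital sub-C*-algebra $B\sbs A\otimes Q$ which is a finite direct sum of matrix algebras over one-dimensional NCCW complexes, together with projection $p=1_B$, such that $\norm{pxp+(1-p)x(1-p)-x}<\eps$ and $\di(pxp,B)<\eps$ for every $x\in F$, while $\sup_{\tau\in {\rm T}(A\otimes Q)}\tau(1-p)<\eps$. One constructs $B$ by exploiting quasidiagonality of the traces, together with the existence and uniqueness theorems of \cite{GLN} for homomorphisms out of the model building blocks, to produce approximately multiplicative $*$-preserving linear maps from such a subhomogeneous algebra into $A\otimes Q$ whose images approximate a prescribed dense subset in the uniform $L^2$-norm over the trace simplex; $Q$-stability then absorbs the approximate-multiplicativity error into a genuine embedding, while strict comparison controls the complementary projection via Cuntz subequivalence.

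The main obstacle is achieving the tracial approximation \emph{uniformly} across the whole trace simplex of $A\otimes Q$, rather than at one trace at a time. Since ${\rm T}(A)$ can be an arbitrary metrizable Choquet simplex with complicated extreme boundary $\partial_e {\rm T}(A)$, the uniform control requires a delicate patching construction over $\partial_e {\rm T}(A\otimes Q)$, combined with the full perturbation and uniqueness theory for homomorphisms from one-dimensional NCCW complexes into $\mathcal Z$-stable targets developed in \cite{GLN} and the first part of the present series. Once $A\otimes Q$ is shown to have generalized tracial rank at most one, the definition of $\mathcal{N}_1$ yields $A\in\mathcal{N}_1$ (using $\mathcal Z$-stability of $A$ to match the Elliott invariants of $A$ and a suitable inductive limit identified via \cite{GLN}), and the isomorphism statement in the second half of Theorem \ref{mainthm-dr} follows by direct application of the main classification theorem of \cite{GLN}.
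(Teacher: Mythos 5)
There is a genuine gap at the step you call the heart of the argument. You propose to verify ${\mathrm{gTR}}(A\otimes Q)\le 1$ directly, by producing a subalgebra $B\subset A\otimes Q$ (a direct sum of matrix algebras over one-dimensional NCCW complexes) with $1_B=p$, $\tau(1-p)$ uniformly small, and---crucially---$\|pxp+(1-p)x(1-p)-x\|<\eps$ and ${\rm dist}(pxp,B)<\eps$ for all $x$ in the prescribed finite set $F$. But nothing in your sketch explains how $F$ gets captured: quasidiagonality of the traces gives approximately multiplicative unital completely positive maps \emph{out of} $A$ (into $Q$), not embeddings of building blocks \emph{into} $A\otimes Q$ whose images approximately contain a given finite subset. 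Semiprojectivity plus $Q$-stability can turn an approximately multiplicative map from an NCCW complex into an honest embedding with tracially large image, but a tracially large image does not approximately commute with, or approximately contain, the elements of $F$; that compatibility is exactly the hard part, and ``the existence and uniqueness theorems of \cite{GLN}'' plus ``a delicate patching over $\partial_e{\rm T}$'' does not identify a mechanism for it. You also never address the $K_0$-obstructions that arise when one tries to glue maps into the two ends of a mapping torus $C(F_1,F_2,\psi_0,\psi_1)$; resolving these is where most of the technical work lies.

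The paper's route is different in direction and in its final step. It first uses the range theorem to produce a model algebra $C=\lim(C_n,\imath_n)$, with each $C_n$ a $\mathcal C_0$-algebra tensored with $Q$, whose invariant $(\Kzero,\Kzero^+,[1],{\rm T},r)$ (no $\Kone$ needed) agrees with that of $A$. It then constructs, for each $\mathcal F$, $\eps$, $\sigma$, a unital $\mathcal F$-$\eps$-multiplicative completely positive map $L:A\to C_1$ approximately realizing the trace pairing; the construction uses quasidiagonality of traces to get maps $A\to Q^l$ and $A\to {\rm C}([0,1],Q^r)$ (via a stable uniqueness theorem, Lemma \ref{partuniq} and Lemma \ref{T06}), and the existence Lemmas \ref{0.5}--\ref{0.7} to correct small $\Kzero$-discrepancies so that the boundary values match through $\psi_0,\psi_1$ and the pieces glue into a map into the mapping torus $C_1$. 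Finally, the conclusion ${\mathrm{gTR}}(A\otimes Q)\le 1$ is \emph{not} obtained by exhibiting NCCW subalgebras by hand, but by citing Lemma 3.4 of \cite{Lncross} (resting on Robert's classification of inductive limits of one-dimensional NCCW complexes and Winter's results), applied to the sequence of approximately multiplicative, approximately trace-preserving maps $H_n:A\to C$. So your proposal would need either to supply the missing factorization/uniqueness mechanism that forces $F$ into the subalgebra (which in effect reconstructs the paper's argument), or to cite a result playing the role of Lemma 3.4 of \cite{Lncross}; as written, the central step does not go through. The remaining framing (Winter's $\mathcal Z$-absorption, quasidiagonality of traces---the paper uses 8.5 of \cite{BBSTWW} for the decomposition-rank case rather than \cite{TWW}---and the reduction of the classification statement to \cite{GLN}) is consistent with the paper.
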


In fact,  we shall obtain (see Theorem \ref{TT}, below) the formally stronger result that every finite unital  simple separable (non-elementary) C*-algebra with finite nuclear dimension,
{{that}}
satisfies the UCT, and whose tracial states are all quasidiagonal, is in the class ${\cal N}_1.$
This result, combined with the recent
result of \cite{TWW} that the quasidiagonality hypothesis is redundant, yields that  the class ${\mathcal N}_1$ includes all finite unital simple separable   C*-algebras with finite nuclear dimension
which satisfy the UCT---see {{Theorem}} \ref{NT} below.
(The case of infinite unital simple separable C*-algebras with finite nuclear dimension was dealt with {over}
twenty years ago by  Kirchberg and Phillips---see  {{Remark}} \ref{Lrem} below.)

In a recent paper,  \cite{EN-K0-Z}, {{two of us ({{G.A.E. and Z.N.}})}}  proved
that every unital simple separable (non-elementary) C*-algebra $A$  with finite decomposition rank,
satisfying the UCT, and such that $\Kzero(A)$ has torsion free rank one,
belongs to ${\mathcal N}_1.$
The present paper is a  continuation of \cite{EN-K0-Z} with, now, a definitive  result.

It is perhaps worth mentioning that, the mathematical content of this paper  (for example, Theorem \ref {T2}) is independent of
that of \cite{GLNI} and \cite{GLNII}. Please also see Remark \ref{Rrtr=1}.

\vspace{0.2in}

{\bf Acknowledgements}.  During this research, G.A.E.~\hspace{-0.018in}received support from the Natural
Sciences and Engineering Research Council of Canada (NSERC), and G.G.~\hspace{-0.01in}and H.L.~\hspace{-0.017in}received support from the {{National Science Foundation (NSF)}}.  Z.N.~\hspace{-0.01in}received support from the Simons Foundation (Grant No.~317222) and the NSF (Grant No.~DMS-1800882). G.G., H.L., and Z.N.~\hspace{-0.01in}also received support from The Research Center for Operator Algebras at East China Normal University
 which is funded by {the National Natural Science Foundation of China (NNSF)} (Grant No.~11531003) and by the Science and Technology Commission of Shanghai
 Municipality (Grant No.~13dz2260400). {{G.G.~\hspace{-0.01in} is also supported by NNSF (Grant No.~11771117).}}
\section{Preliminaries}

\begin{df}\label{Dnotation}
As usual,
let $\Q$ denote the field of rational numbers.
Let us use the notation $Q$ for the UHF-algebra with $\Kzero(Q)=\Q$ and $[1_Q]=1.$


\end{df}

\begin{df}[N. Brown \cite{Bn}]\label{Dqdt} Let $A$ be a unital C*-algebra.
Denote by $\mathrm{T}(A)$
the tracial state space of $A,$  and denote by ${\rm T}_{{\rm qd}}(A)$ {{the subset of the quasidiagonal tracial states-- those
$\tau\in {\rm T}(A)$}} with the following property:
For any finite subset ${\mathcal F}$ and $\ep>0,$ there exists a unital completely positive map $\phi: A\to Q$ such that
\begin{equation*}
|\tau(a)-{\rm tr}(\phi(a))|<\ep,\quad a\in {\mathcal F},
\end{equation*}
and
\begin{equation*}
\|\phi(a)\phi(b)-\phi(ab)\|<\ep,\quad a,\, b\in {\mathcal F},
\end{equation*}
where ${ {\rm tr}}$ is the unique tracial state of $Q.$
\end{df}

\begin{df}\label{DET}
Let $F_1$ and $F_2$ be two finite dimensional C*-algebras and let $\psi_0, \psi_1: F_1\to F_2$ be two unital \hm s.
Consider the corresponding mapping torus,
$$
C={\rm C}(F_1, F_2, \psi_0, \psi_1)=\{(f,a)\in {\rm C}([0,1], F_2)\oplus F_1: f(0)=\psi_0(a)\andeqn f(1)=\psi_1(a)\}.
$$
Denote by  ${\mathcal C}$  the class of unital C*-algebras obtained in this way.
C*-algebras in the class ${\mathcal C}$ are often called Elliott-Thomsen building blocks. They are also called
one-dimensional non-commutative CW complexes.

Denote by ${\mathcal C}_0$ the subclass of ${\mathcal C}$ consisting of those C*-algebras $C\in {\mathcal C}$ such
that $\Kone(C)=\{0\}.$

We shall in fact only work with the $Q$-stabilizations of these algebras, which can be described just by replacing $F_1$ and $F_2$ with finite direct sums of copies of $Q$.
\end{df}

\begin{df}[9.1 of \cite{GLNI}]\label{DgTR}
Let $A$ be a (non-elementary) unital simple C*-algebra. We shall say that $A$ has generalized tracial rank at most one,
if the following property holds:

Let $\ep>0,$ let $a\in A_+\setminus \{0\}$ and let ${\mathcal F}\subseteq A$ be a
finite subset.
There exist a non-zero projection $p\in A$ and a sub-C*-{{algebra}} $C\in {\mathcal C}$
with $1_C=p$ such that
\begin{eqnarray*}
&&\|xp-px\|<\ep,\quad x\in {\mathcal F}, \\
&&{\rm dist}(pxp, C)<\ep,\quad x\in {\mathcal F}, \andeqn \\
&&1-p\lesssim a.
\end{eqnarray*}
The last condition means that  there exists a partial isometry $v\in A$ such that
$v^*v=1-p$ and $vv^*\in \overline{aAa}.$
If $A$ has generalized tracial rank at most one, we will write ${\mathrm{gTR}}(A)\le 1.$
It was shown in \cite{GLNI} that if ${\mathrm{gTR}}(A)\le 1$, then $A$ is quasidiagonal, and is ${\mathcal Z}$-stable if it is also amenable.
\end{df}

\begin{df}\label{dpk}
Let $A$ and $B$ be unital C*-algebras and let $L: A\to B$ be a contractive completely positive map.
Let ${\mathcal G}$ be a finite subset of $A$ and $\dt>0.$ Recall that
$L$ is said to be ${\mathcal G}$-$\dt$-multiplicative if $\|L(x)L(y)-L(xy)\|<\dt$ for all $x, y\in {\mathcal G}.$
Given a finite subset ${\mathcal P}$ of projections in $A,$ if ${\mathcal G}$ is sufficiently large and $\dt$ is sufficiently small, then
there is a projection $q\in B$ such that $\|L(p)-q\|<1/4.$ Moreover, for each projection {{$p \in {\mathcal G}$}}, if $\delta<1/4$, then the projection $q$ can be chosen such that
\begin{equation}\label{pert-proj}
\|L(p) - q\|<2\delta.
\end{equation}
Note that if $q'\in B$ is another projection such that
$\|L(p)-q'\|<1/4,$ then {{$q'$ and $q$}} are unitarily equivalent. Recall that  $[L(p)]$ often denotes this equivalence
class of projections (see e.g. \cite{LinTAF2}). As usual, when $[L(p)]$ is written, it is understood that ${\mathcal G}$ is sufficiently large and
$\dt$ is sufficiently small that $[L(p)]$ is well defined.

\end{df}

\begin{df}[\cite{GLNI}]\label{Dnotation-1}

Let $A$ be a unital simple separable C*-algebra. Let us say that $A$ has rational generalized tracial rank at most one if ${\mathrm{gTR}}(A\otimes Q)\le 1.$

Let us say that $A$ belongs to the class ${\cal N}_1$  if, in addition, it {{is amenable and}}
satisfies the UCT (\cite{RS}), and is Jiang-Su stable, i.e., {{is}} invariant under
tensoring with the Jiang-Su C*-algebra (\cite{JS}; see also \cite{Ell}).
  As pointed out above, it follows from \cite{GLNII} (together with \cite{Wdecomp}) that, instead of the last property, it is equivalent to
assume finite decomposition rank (or, by \cite{W}, even just finite nuclear
 dimension); see Definition \ref{DefDr} below.
(By now, we know  (see \cite{MS2}, \cite{SWW} and \cite{CETWW}) that
 a unital separable simple nuclear \CA\,  is Jiang-Su stable if and only if it has
 finite nuclear dimension---see
 {{red} ``Added November 2, 2021"} at the end of this paper.)


\end{df}

The following are the main results of \cite{GLNI} and \cite{GLNII}:

\begin{thm}\label{Tiso}
Let $A$ and $B$ be two unital C*-algebras in ${\mathcal N}_1.$ Then
$A\cong B$ if and only if  ${\rm Ell}(A)\cong {\rm Ell}(B),$ i.e., $A\cong B$ if and only if
$$
(\Kzero(A), \Kzero(A)_+, [1_A]_0, \Kone(A), {\rm T}(A), r_A)\cong
(\Kzero(B), \Kzero(B)_+, [1_B]_0, \Kone(B), {\rm T}(B), r_B).
$$
Moreover, any isomorphism between $\mathrm{Ell}(A)$ and $\mathrm{Ell}(B)$ can be lifted to an isomorphism between $A$ and $B$.
\end{thm}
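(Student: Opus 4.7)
The plan is to prove this via the standard Elliott intertwining strategy, which requires establishing an \emph{existence} theorem (lifting Elliott invariant morphisms to $*$-homomorphisms, at least approximately) and a \emph{uniqueness} theorem (showing that any two such approximate liftings agree up to approximate unitary equivalence), and then combining them via Elliott's approximate intertwining. The first reduction would be to tensor with $Q$: since any $A\in {\cal N}_1$ is Jiang--Su stable, we have $A\cong A\otimes{\cal Z}$, and $A\otimes{\cal Z}\otimes Q\cong A\otimes Q$, so classifying $A$ up to isomorphism is equivalent to classifying $A\otimes Q$ equivariantly with respect to the $Q$-action; moreover, by definition of ${\cal N}_1$, the algebra $A\otimes Q$ has ${\rm gTR}\le 1$. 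One should therefore first classify $Q$-stable algebras of generalized tracial rank at most one, and then recover $A$ from $A\otimes Q$ using Jiang--Su stability together with a pullback along an Elliott invariant isomorphism.

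For the existence side I would construct, for each admissible Elliott invariant, a ``model'' inductive limit algebra built from the Elliott--Thomsen building blocks of the class ${\mathcal C}$ (or rather the $Q$-stabilized versions, as noted in Definition~\ref{DET}), using that ${\cal C}$ exhausts all possible Elliott invariants at the level of the building blocks. The existence theorem would then take the form: given $A$ with ${\rm gTR}(A\otimes Q)\le 1$, any morphism $\alpha:{\rm Ell}(A)\to {\rm Ell}(B)$ can be approximated, on any finite set $\mathcal F\subset A$ and to within any tolerance, by a ${\mathcal G}$-$\delta$-multiplicative completely positive contraction $A\to B$ inducing $\alpha$ on the relevant $K$-theoretic and tracial data, by using the local approximation by ${\mathcal C}$-subalgebras provided by Definition~\ref{DgTR} and then matching invariants block by block.

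The uniqueness theorem is the main obstacle. One needs that if $\phi_1,\phi_2: A \to B$ are two (sufficiently multiplicative) c.p.c.\ maps inducing the same data on $(K_0, K_0^+, [1], K_1, T, r)$, then they are approximately unitarily equivalent on finite subsets. Here the difficulty is bookkeeping of $K_1$-obstructions and of the pairing between $K$-theory and traces; the natural tool is $KK$-theory with coefficients (using the UCT to convert $KK(A,B)$ into an Ext/Hom pair), together with a careful analysis of how approximate morphisms factor through ${\mathcal C}$-subalgebras, so that one can use the known uniqueness results for $*$-homomorphisms between ${\mathcal C}$-algebras. Quasidiagonality of $A$ (which holds since ${\rm gTR}(A\otimes Q)\le 1$) and ${\mathcal Z}$-stability are used to absorb the trace-kernel error terms.

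Finally, I would feed the existence and uniqueness theorems into Elliott's approximate intertwining machine to produce a $*$-isomorphism $A\otimes Q\to B\otimes Q$ realizing the given isomorphism of Elliott invariants (tensored with the identity on $Q$). To pass back to $A\cong B$ itself, I would combine this with the Jiang--Su stability of $A$ and $B$ and a two-sided intertwining between $A$, $A\otimes Q$, and the model algebras, recovering an isomorphism $A\to B$ lifting the prescribed invariant isomorphism. The ``only if'' direction is of course immediate, since the Elliott invariant is a functor on C*-algebras.
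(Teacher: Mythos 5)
The paper itself does not prove Theorem \ref{Tiso}: it is quoted, verbatim, as one of the main results of \cite{GLN} and used as a black box, so what you have undertaken is a re-proof of the principal theorem of that (book-length) paper. Your outline does reproduce the broad architecture of the actual argument in \cite{GLN} --- an existence theorem realizing morphisms of the invariant by approximately multiplicative maps, a (stable) uniqueness theorem controlled by $KK$/$\mathrm{KL}$-data and traces, model algebras built from the Elliott--Thomsen class ${\mathcal C}$, Elliott's approximate intertwining, and a final reduction exploiting ${\mathcal Z}$-stability --- so the strategy is not misdirected. But as a proof it has genuine gaps: every load-bearing ingredient you invoke (the uniqueness theorem for algebras with ${\mathrm{gTR}}\le 1$, including the $K_1$- and Bott-obstruction bookkeeping; the corresponding existence theorem; the range-of-invariant theorem \ref{Trange}) is itself a deep theorem occupying hundreds of pages in \cite{GLN}, and your text names these steps rather than proving them or reducing them to cited results.

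The most concrete defect is the last step. Passing from a classification of $A\otimes Q$ (equivalently, of algebras with ${\mathrm{gTR}}\le 1$ after tensoring with UHF algebras) back to the ${\mathcal Z}$-stable algebra $A$ is not achieved by ``classifying $A\otimes Q$ equivariantly with respect to the $Q$-action'' nor by a routine two-sided intertwining between $A$ and $A\otimes Q$: the map $\Kzero(A)\to\Kzero(A\otimes Q)$ kills torsion, so an isomorphism ${\rm Ell}(A\otimes Q)\cong{\rm Ell}(B\otimes Q)$ induced from ${\rm Ell}(A)\cong{\rm Ell}(B)$ genuinely loses information, and there is no canonical way to descend an isomorphism $A\otimes Q\cong B\otimes Q$ to $A\cong B$. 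In \cite{GLN} (following Winter's deformation technique and Lin's ``localization'' machinery) one must instead classify, compatibly, the algebras $A\otimes M_{\mathfrak p}$ and $A\otimes M_{\mathfrak q}$ for relatively prime supernatural numbers ${\mathfrak p},{\mathfrak q}$, control uniqueness of maps into $A\otimes Z_{{\mathfrak p},{\mathfrak q}}$ along the interval algebra, and then use ${\mathcal Z}\cong\lim Z_{{\mathfrak p},{\mathfrak q}}$ to patch these into an isomorphism of the ${\mathcal Z}$-stable algebras realizing the prescribed lift of the full invariant (including torsion in $\Kzero$ and all of $\Kone$). Without this patching argument --- or some substitute for it --- your proposal does not yield the stated theorem, and in particular does not deliver the ``moreover'' clause that an arbitrary isomorphism of invariants lifts.
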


\begin{proof}
The  first part of the statement follows from {{Theorem 29.8 of \cite{GLNII}.}}

The second part of the statement needs some explanation.
Note that $A$ and $B$ satisfy  the assumption of Theorem 29.5 of \cite{GLNII}, {{by Corollary 19.3 of \cite{GLNI}.}} 
 Let $\Gamma: {\rm Ell}(A) \to {\rm Ell}(B)$ be an isomorphism. Repeat the proof of Theorem 29.5 of \cite{GLNII}  until the second-last sentence
of that proof, namely:
``One then obtains a unitary suspended isomorphism which lifts  $\Gamma$ along ${\cal Z}_{\p,\q}$ (see \cite{Wlocal})."
For the present purpose (note that the lifting statement is not explicitly formulated in \cite{GLNII}), replace the last sentence of that proof by the sentence ``It follows from Theorem 7.1 of \cite{Wlocal} that $A\otimes {\cal Z}$  and $B\otimes {\cal Z}$ are isomorphic and
the isomorphism {{lifts}}  $\Gamma$."
\end{proof}

\begin{thm}[Theorem 13.50 of \cite{GLNI}]\label{Trange}
For any non-zero countable weakly unperforated simple ordered group
$G_0$ with order unit $u,$ any countable abelian group $G_1,$ any non-empty
metrizable Choquet simplex $T$, and any surjective affine map
$r: T\to \mathrm{S}_u(G_0)$ ($\mathrm{S}_u(G_0)$ is the state space of $G_0$---always non-empty), there
exists a (unique) unital simple C*-algebra  $C$ in ${\mathcal N}_1$, which is the inductive limit of
a sequence of subhomogeneous C*-algebras with two-dimensional spectrum,
such that
$$
{\rm Ell}(C)=(G_0, (G_0)_+, u, G_1, T, r).
$$
\end{thm}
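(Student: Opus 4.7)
The plan is to construct $C$ as a unital inductive limit $C=\lim(C_n,\phi_n)$ where each $C_n$ is a finite direct sum of building blocks of the prescribed form---subhomogeneous algebras with two-dimensional spectrum, realized concretely as corners of $M_{k(n)}(\mathrm{C}(X_n))$ for suitable two-dimensional CW complexes $X_n$ (spheres, 2-disks, and mapping tori of endomorphisms of $S^1$, to generate arbitrary finitely generated abelian groups in $\Kone$ and arbitrary weakly unperforated finitely generated ordered groups in $\Kzero$). The connecting maps $\phi_n$ will be diagonal homomorphisms consisting of direct sums of evaluations at points of $X_n$ together with a few homomorphisms coming from $\Kone$-generators; the choice of points will be dense in $X_n$ to force simplicity of the limit via the standard Elliott mutual-inclusion argument.

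First I would reduce to finitely generated data. Write $G_0=\lim G_0^{(n)}$ as an increasing union of finitely generated weakly unperforated simple ordered subgroups each containing $u$, $G_1=\lim G_1^{(n)}$, and approximate $T$ by a cofinal inverse system of finite-dimensional simplices $T_n$ whose extreme boundaries come from chosen points of $X_n$, arranged so that $r$ is approximated by a compatible sequence of affine maps $r_n:T_n\to \St_u(G_0^{(n)})$. At each stage, the building block $C_n$ is engineered so that $(\Kzero(C_n),\Kzero(C_n)_+,[1]_0,\Kone(C_n),\mathrm{T}(C_n),r_{C_n})$ matches $(G_0^{(n)},(G_0^{(n)})_+,u,G_1^{(n)},T_n,r_n)$, which is possible because these invariants can be computed directly from the spectra and the boundary data.

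Next I would construct the bonding maps $\phi_n:C_n\to C_{n+1}$. They need to induce: on $\Kzero$, the inclusion $G_0^{(n)}\hookrightarrow G_0^{(n+1)}$ (preserving unit and positive cone); on $\Kone$, the map $G_1^{(n)}\to G_1^{(n+1)}$; and on traces, the dual of $r_n$ composed with the approximation $T_{n+1}\to T_n$. To achieve this while maintaining simplicity, I would take $\phi_n$ to be a large direct sum of point-evaluation homomorphisms at a $1/n$-dense set of points in $X_{n+1}$, perturbed by a bounded number of $\Kone$-producing summands. A crucial bookkeeping device, which dates back to Elliott--Thomsen, is to pre-compose with unitaries or use "slow-dimension-growth" adjustments so that the prescribed pairing $r$ survives in the limit: the traces of $C$ are parametrized by measures on $\varprojlim X_n$, and the point-evaluation construction identifies the pairing with integration against the projection cocycle, which can be matched to $r$ by careful choice of the spectral measures.

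Finally I would verify that $C$ lies in $\mathcal{N}_1$. Since each $C_n$ is type I, the UCT passes to the limit. Since the spectra $X_n$ have dimension at most two and the multiplicities are controlled, the limit has finite decomposition rank---in particular, no dimension growth---so by the already-used results from \cite{GLN} (together with \cite{W}), $C$ is Jiang--Su stable and $\mathrm{gTR}(C\otimes Q)\le 1$; by Theorem \ref{Tiso} applied to the recognized invariant, $C\in \mathcal{N}_1$. The main obstacle is the simultaneous matching of all six pieces of the Elliott invariant at each finite stage together with their inductive compatibility: the pairing $r$ is the genuinely delicate piece, because perturbations of connecting maps that fix $\Kzero$ and $\Kone$ can move traces significantly, so one must choose the point-evaluation multiplicities and spectral densities with care, typically by an intertwining argument that alternates between realizing ordered $K$-theory and correcting the trace pairing---this is the technical heart of the construction in \cite{GLN}.
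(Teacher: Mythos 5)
There is nothing in this paper for your argument to be compared with: Theorem \ref{Trange} is stated here without proof, being quoted verbatim as one of the two main results of \cite{GLN} (where it is obtained from the model construction of Section 13 of that paper). Your proposal is, in effect, an outline of such an external construction, and at the decisive point you yourself defer to \cite{GLN} (``the technical heart of the construction''), so as a self-contained proof it is circular; what it would have to supply is precisely the existence construction that the present paper deliberately imports.

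Beyond that, the concrete choice of building blocks in your sketch cannot work. Corners of $\mathrm{M}_{k(n)}(\mathrm{C}(X_n))$ with $\dim X_n\le 2$ have torsion-free $\Kone$: for a two-dimensional CW complex, $K^1(X)\cong H^1(X;\Z)$ is free abelian, and the mapping torus of a degree-$d$ self-map of $S^1$ places its torsion in $H^2$, i.e.\ in $K^0$, not in $K^1$. Hence an arbitrary countable abelian group $G_1$ with torsion can never be realized as $\Kone$ of an inductive limit of such locally homogeneous blocks, no matter how the connecting maps are arranged. Likewise, for a connected homogeneous block every tracial state induces the same state on $\Kzero$ (given by the rank), so the surjective pairing $r\colon T\to \mathrm{S}_u(G_0)$---and, more generally, a weakly unperforated simple $G_0$ that need not have the Riesz property---cannot be produced ``by careful choice of the spectral measures'' within that class. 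This is exactly why the theorem (and \cite{GLN}) uses genuinely \emph{subhomogeneous} blocks, e.g.\ the Elliott--Thomsen algebras of Definition \ref{DET}, whose $\Kone$ is the cokernel of $(\psi_0)_{*0}-(\psi_1)_{*0}$ and hence an arbitrary finitely generated abelian group, and whose traces are decoupled from the states on $\Kzero$; these must be combined with two-dimensional homogeneous pieces to capture torsion in $\Kzero$. Your reduction of $G_0$ to an increasing union of finitely generated weakly unperforated \emph{simple} ordered subgroups also needs justification (the induced order on a finitely generated subgroup need not be of this kind). The uniqueness assertion, on the other hand, is correctly handled: once existence of a model in ${\mathcal N}_1$ is known, uniqueness is immediate from Theorem \ref{Tiso}.
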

\begin{df}
Let $A$ and $B$ be C*-algebras. Recall (\cite{KW}) that a completely positive map $\phi: A\to B$ is said to have order zero if
$$ab=0\  \Rightarrow\  \phi(a)\phi(b)=0,\quad a, b\in A.$$
\end{df}

\begin{df}[\cite{KW}, \cite{WZ}]\label{DefDr}
A C*-algebra $A$ has nuclear dimension at most $n$, if there exists a net $(F_\lambda , \psi_\lambda , \phi_\lambda)$, $\lambda\in\Lambda$, such that the $F_\lambda$ are finite dimensional C*-algebras, and such that $\psi_\lambda: A\to F_\lambda$ and $\phi_\lambda: F_\lambda \to A$ are completely positive maps satisfying \\
$~\ \ $(1) $\phi_\lambda\circ \psi_\lambda \to \id_A$ pointwise (in norm),\\
$~\ \ $(2) $\|\psi_\lambda\|\leq 1$, and\\
$~\ \ $(3) for each $\lambda$, there is a decomposition
$F_\lambda=F_\lambda^{(0)}\oplus\cdots\oplus F_\lambda^{(n)}$
such that each restriction
$\phi_\lambda|_{F_\lambda^{(j)}}$ is a contractive order zero map.

Moreover, if the the map $\phi_\lambda$ can be chosen to be contractive itself, then $A$ is said to have  decomposition rank at most $n$.

Recall that finite nuclear dimension immediately implies nuclearity, which by \cite{Connes-nuc} and \cite{Haag-nuc} is equivalent to amenability.  The  nuclear dimension of a certain $C^*$-algebra associated with a discrete metric space is related to  asymptotical dimension  of the underline space, and the concept of asymptotical dimension  has fundamental applications to geometry and topology (see \cite{Yu1} and \cite{Yu2}).
\end{df}

The main theorem of this paper is that the class  ${\mathcal N}_1$ of C*-algebras actually contains (and hence coincides with) the class of all (non-elementary)  {{unital}} simple separable  C*-algebras with finite decomposition rank which also satisfy the UCT. 
In particular it follows (on using both \ref{Tiso} and \ref{Trange}) that every such C*-algebra is the inductive limit of a sequence of subhomogeneous C*-algebras (with no dimension growth). 

\section{Some existence theorems}

Denote by ${\cal K}$ the C*-algebra of all compact operators {{on}} $l^2,$ an infinite dimensional
separable Hilbert space.  Let $\{e_{i,j}\}$ be {{the canonical}} system of matrix units for ${\cal K}.$
We will use the fact that ${\cal K}\otimes {\cal K}\cong {\cal K}$
and assume that such an isomorphism has been fixed.
{{Let $B$ be a C*-algebra.  We may identify  $B\otimes {\cal K}$ with $(B\otimes {\cal K})\otimes e_{1,1}.$
Let $A$ be another C*-algebra and
$\psi_1, \psi_2, ...,\psi_n: A\to B\otimes {\cal K}$ be linear maps.
For convenience, when there is no confusion,  {{using the}} identification above, we {{shall write}} $\Phi:=\psi_1\oplus \psi_2\oplus \cdots \oplus \psi_n:A \to B\otimes {\cal K}$ {{to denote the}} orthogonal sum:
\beq
\Phi(a)=\sum_{i=1}^n \psi_i(a)\otimes e_{i,i}
\rforal a\in A,
\eneq
}}

{{Likewise, for projections $p_1,p_2, ...,p_m\in B\otimes {\cal K},$ we {{shall}} write
$P:=p_1\oplus p_2\oplus \cdots \oplus p_m\in B\otimes {\cal K}$ 
if  there is no confusion.
Therefore, in the case that $\psi_i(A)\subseteq p_i(B\otimes {\cal K})p_i,$ $1\le i\le n,$
we may view  $\Phi$ as mapping $A$ to $P(B\otimes {\cal K})P.$}}

{{We will use this convention repeatedly.}}

\begin{lem}\label{0.5}
Let $A$ be a unital simple  separable amenable quasidiagonal C*-algebra satisfying the UCT. Assume that $A\cong A\otimes Q$.

Let a finite subset $\mathcal G$ of $A$ and $\ep_1, \ep_2>0$ be given. Let $p_1, p_2, ..., p_s\in  A$
be projections such that
$[1], [p_1], [p_2], ..., [p_s]\in \Kzero(A)$ are $\Q$-linearly independent. (Recall that $\Kzero(A)\cong\Kzero(A\otimes Q)\cong\Kzero(A)\otimes \Ratn$.) There are a $\mathcal G$-$\ep_1$-multiplicative completely positive map $\sigma: A\to Q$ with $\sigma(1)$ a projection satisfying
$$\mathrm{tr}(\sigma(1))<\ep_2$$
(where $\mathrm{tr}$ denotes the unique {{tracial}} state on $Q$), and $\delta>0$,
such that, for any $r_1, r_2, ..., r_s\in\Q$ with
$$|{r_i}|<\dt,\  i=1, 2,..., s,$$
there is a $\mathcal G$-$\ep_1$-multiplicative completely positive map $\mu: A\to Q $, with $\mu(1)=\sigma(1)$, such that
$$[\sigma(p_i)]-[\mu(p_i)]=r_i,\quad i=1,2, ...,s.$$

\end{lem}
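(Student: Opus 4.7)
Since $A$ is a unital simple separable amenable quasidiagonal C*-algebra satisfying the UCT, Brown's characterization (combined with \cite{TWW}) gives that every tracial state on $A$ is quasidiagonal: for every $\tau\in\mathrm{T}(A)$, every finite $\mathcal{H}\subset A$, and every $\eta>0$ there is an $\mathcal{H}$-$\eta$-multiplicative u.c.p.\ map $A\to M_k(\mathbb{C})$ whose normalized matrix trace approximates $\tau$ on $\mathcal{H}$ to within $\eta$. I would first pick a faithful tracial state $\tau_0\in\mathrm{T}(A)$ and produce such a map $\phi: A\to M_k$ for the data $(\mathcal{G}\cup\{p_1,\ldots,p_s\},\ep_1/2)$. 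Since $Q$ is the universal UHF-algebra, there is a projection $e\in Q$ with $\mathrm{tr}_Q(e)=:t<\ep_2$ and with $M_k$ embedded unitally into the corner $eQe$; composing yields $\sigma:A\to Q$ that is $\mathcal{G}$-$\ep_1$-multiplicative with $\sigma(1)=e$ and $[\sigma(p_i)]=t\cdot\mathrm{tr}_{M_k}(\phi(p_i))\in\mathbb{Q}=\Kzero(Q)$.

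\textbf{Choosing $\delta$.} The $\Q$-linear independence of $[1_A],[p_1],\ldots,[p_s]$ means that $[p_1],\ldots,[p_s]$ are $\Q$-linearly independent modulo $\Q[1_A]$. Because $A\cong A\otimes Q$ is in particular $\mathcal Z$-stable, the trace map $\mathrm{T}(A)\to\mathrm{S}(\Kzero(A))$ is surjective onto the state space, so the evaluation $\tau\mapsto(\tau(p_1),\ldots,\tau(p_s))$ sends $\mathrm{T}(A)$ onto a convex subset of $\R^s$ of full dimension. Hence, after perturbing $\tau_0$ slightly if necessary to lie in the interior of this image, the point $(\tau_0(p_1),\ldots,\tau_0(p_s))$ admits a neighborhood of some radius $\delta_0>0$ contained in the image. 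I then set $\delta:=t\delta_0/2$. For any $r_1,\ldots,r_s\in\Q$ with $|r_i|<\delta$, there is a tracial state $\tau_1\in\mathrm{T}(A)$ with $\tau_1(p_i)=\tau_0(p_i)-r_i/t$ for all $i$.

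\textbf{Constructing $\mu$, and the main obstacle.} Applying quasidiagonality again to $\tau_1$ yields a $\mathcal{G}$-$\ep_1/2$-multiplicative u.c.p.\ map $\phi_1: A\to M_{k'}(\C)$ with $\mathrm{tr}_{M_{k'}}\circ\phi_1$ close to $\tau_1$ on $\mathcal{G}\cup\{p_1,\ldots,p_s\}$. By choosing $k'$ divisible by the denominators of all the rationals $\tau_1(p_i)$ and performing small unitary rank-adjustments of the projections $\phi_1(p_i)\in M_{k'}$ obtained via \eqref{pert-proj}, I arrange that $\mathrm{tr}_{M_{k'}}(\phi_1(p_i))$ equals $\tau_1(p_i)$ \emph{exactly}. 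Embed $M_{k'}$ unitally into a corner $e'Qe'$ with $\mathrm{tr}_Q(e')=t$; since $[e']=[e]=t$ in $\Kzero(Q)=\Q$ there is a unitary $u\in Q$ with $ue'u^*=e$. Then $\mu:=\mathrm{Ad}(u)\circ(\text{embedding})\circ\phi_1:A\to Q$ is $\mathcal{G}$-$\ep_1$-multiplicative, satisfies $\mu(1)=e=\sigma(1)$, and $[\mu(p_i)]=t\cdot\tau_1(p_i)=[\sigma(p_i)]-r_i$, as required. The principal obstacle is securing the exact equalities $\mu(1)=\sigma(1)$ and $[\mu(p_i)]-[\sigma(p_i)]=-r_i$ (not merely approximately); these are delivered respectively by the unitary-conjugation step in $Q$ and by the precise rank-adjustment in the highly divisible block $M_{k'}$, both of which crucially exploit the fact that $Q$ is UHF of type $\Q$.
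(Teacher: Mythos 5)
There are two genuine gaps, and they sit exactly at the points where the lemma is nontrivial. First, your choice of $\delta$ rests on the claim that $\Q$-linear independence of $[1_A],[p_1],\ldots,[p_s]$ forces the image of $\mathrm{T}(A)\ni\tau\mapsto(\tau(p_1),\ldots,\tau(p_s))\in\R^s$ to contain a ball, so that a trace $\tau_1$ with $\tau_1(p_i)=\tau_0(p_i)-r_i/t$ exists. This is false: take $A=A_\theta\otimes Q$ ($A_\theta$ an irrational rotation algebra), which has a unique tracial state while $\Kzero(A)\cong\Q^2$; the class of $1$ and of a Rieffel projection are $\Q$-linearly independent, yet the trace-evaluation image is a single point, so no such $\tau_1$ exists for any $r_i\neq 0$. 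Linear independence in $\Kzero$ simply does not give independent variability of traces. Second, even when $\tau_1$ exists, quasidiagonality of traces only gives $\mathrm{tr}\circ\phi_1\approx\tau_1$ on the $p_i$, while the conclusion demands the \emph{exact} identities $[\sigma(p_i)]-[\mu(p_i)]=r_i$ together with $\mu(1)=\sigma(1)$. Your ``small unitary rank-adjustments'' cannot bridge this: unitary conjugation preserves rank, and any projection in $M_{k'}$ of a different rank is at distance $\geq 1$ from $\phi_1(p_i)$, so changing $[\phi_1(p_i)]$ means changing the map $\phi_1$ itself --- and doing so for all $i$ simultaneously, independently, without moving $[\phi_1(p_j)]$ for $j\neq i$ or $\phi_1(1_A)$, while retaining $\mathcal G$-$\eps_1$-multiplicativity, is precisely the statement being proved, so the argument is circular at its crux. (Also, $\tau_0(p_i)$ and $\tau_1(p_i)$ need not be rational, so ``$k'$ divisible by the denominators of $\tau_1(p_i)$'' does not parse; the true target $[\sigma(p_i)]-r_i$ is a rational determined by $\sigma$, not by $\tau_0$.)

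A telling symptom is that your argument never uses the UCT, which is an essential hypothesis. The paper's proof uses the $\Q$-independence to build homomorphisms $\af_i:\Kzero(A)\to\Q$ with $\af_i([p_i])=1$, $\af_i([1_A])=0$, $\af_i([p_j])=0$ for $j\neq i$, regards them (via the UCT) as elements of $\mathrm{KL}(A,Q)$, and then invokes a stable existence theorem (Theorem 5.9 of \cite{LinTAF2}, available because $A$ is a strong NF-algebra by \cite{BK2}; alternatively Theorem 5.5 of \cite{DE}) to realize each $\af_i$ \emph{exactly} as a difference $[\sigma_i]-[\mu_i]$ of $\mathcal G$-$\eps_1$-multiplicative maps into $Q\otimes\mathcal K$ with $\sigma_i(1_A)=\mu_i(1_A)$. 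The prescribed small differences $r_i$ are then obtained by purely $K_0$-level bookkeeping inside $Q\otimes\mathcal K$: rescaling $\sigma_i,\mu_i$ to summands of trace $|r_i|$ and swapping $\sigma_i\leftrightarrow\mu_i$ according to the sign of $r_i$, so that $\sigma=\tilde\sigma\oplus\gamma$ and $\mu=\tilde\mu\oplus\gamma$ share the unit and differ on $[p_i]$ by exactly $r_i$. Some mechanism of this kind --- producing exact $K_0$-differences from an existence theorem rather than from approximate trace data --- is what your proposal is missing.
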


\begin{proof}
Let us agree that $\sigma$ and $\mu$ (to be constructed below) are also understood to be required to be sufficiently multiplicative on $p_1, p_2, ..., p_k$ that the classes $[\sigma(p_i)]$ and $[\mu(p_i)]$ make sense (see \ref{dpk} above). (Similarly for other completely positive  approximately multiplicative maps, to be introduced below.)

Denote by $G_0$  the subgroup of $\Kzero(A)$ generated by $\{[1_A], [p_1],...,[p_s]\}.$
%
Since $[1_A], [p_1], ...,[p_s]$ are $\Q$-linearly independent, for each $i=1,2,...,s,$ there exists a \hm\,
$\af_i: \Kzero(A)\to \Q\cong \Kzero(Q)$ such that
\beq\label{05-2}
\af_i([p_i])=1,\quad \af_i([1_A])=0,\quad\mathrm{and}\quad \af_i([p_j])=0,\,\,\,j\not=i.
\eneq
We may regard $\af_i$ as an element of $\mathrm{KL}(A,Q)$ (see \cite{DL}).
Since $A$ is a unital simple amenable quasidiagonal C*-algebra, by \cite{BK2},  $A$ is a unital simple strong NF-algebra.
It follows from \cite{BK2} that $A=\overline{\bigcup_{n=1}^{\infty} A_n},$ where $\{A_n\}$  is an increasing sequence of
unital, amenable, residually finite-dimensional  C*-algebras.
It follows from Theorem 5.9 of \cite{LinTAF2} that there are ${\mathcal G}$-$\ep_1$-multiplicative completely positive maps
$\sigma_i,\mu_i: A\to Q\otimes {\mathcal K}$ such that
$\sigma_i(1_A)$ and $\mu_i(1_A)$ are projections, and
\beq\label{05-3}
[\sigma_i]|_{G_0}-[\mu_i]|_{G_0}=\af_i|_{G_0},\quad i=1,2,...,s.
\eneq
Since $\af_i([1_A])=0,$ we have $[\sigma_i(1_A)]=[\mu_i(1_A)].$ Therefore, \wilog, we may assume
that
$$\sigma_i(1_A)=\mu_i(1_A)=:P_i,\quad i=1,2,...,s.$$

{{Consider}} the projection
$$
P:= \bigoplus_{i=1}^s (P_i\oplus P_i),
$$
and the unital $\mathcal G$-$\ep_1$-multiplicative completely positive  map
$$
\bigoplus_{i=1}^s(\sigma_i\oplus\mu_i): A\to P(Q\otimes \mathcal K) P,
$$
 where $\sigma_i \oplus \mu_i$ means the map $a\mapsto \sigma_i(a)\oplus\mu_i(a)$
 {{(see the beginning of this section).}}
Note that $P(Q\otimes \mathcal K) P\cong Q$. Choose a projection $R\in Q\otimes\mathcal K$ with $0 < \mathrm{tr}(R)
{{\le}}  \min\{1, \ep_2\}$ and a rescaling  homomorphism
$$S: Q\otimes \mathcal K \to Q\otimes\mathcal K,\ P \mapsto R.$$
Consider the map
$$\sigma:=S\circ (\bigoplus_{i=1}^s(\sigma_i\oplus\mu_i)): A\to Q\otimes\mathcal K$$
and the strictly positive number
$$\delta := \frac{\mathrm{tr}(R)}{\mathrm{tr}(P)}.$$
(Here, $\mathrm{tr}$ denotes the tensor product of the traces on $Q$ and $\mathcal K$, normalized {{to be $1$ on $1_Q\otimes e_{11}$}}.)
Note that since $\mathrm{tr}(R) {{\le}} 1$, one has
$$\mathrm{tr}(\sigma(1))=\mathrm{tr}(R){{\le}}1,$$ and so we may regard $\sigma$ as a map from $A$ to $Q$ (rather than $Q\otimes\mathcal K$).

Let us show that $\sigma$ and $\delta$ satisfy the condition of the lemma.

Let $r_1, r_2, ..., r_s\in\Q$ be given with
$$|r_i|<\delta,\quad  i=1, 2, ..., s.$$

For each $i=1, 2, ..., s$, choose a projection $R_i\in Q\otimes\mathcal K$ with $\mathrm{tr}(R_i)=|r_i|$, and choose a rescaling homomorphism
$$S_i: Q\otimes\mathcal K\to Q\otimes\mathcal K,\ 1\otimes e \mapsto R_i,$$
where $e$ is a minimal non-zero projection of $\mathcal K$. For each $i=1, 2, ..., s$, consider the pair of maps
$$S_i\circ \sigma_i,\ S_i\circ\mu_i: A\to Q\otimes\mathcal K.$$
Then, for each $i=1, 2, ..., s$,
\beq
&&{[}S_i\circ \sigma_i(p_i){]}-{[}S_i\circ \mu_i(p_i){]}=|r_i|, \nonumber \\
&&[S_i\circ \sigma_i(1){]} - {[}S_i\circ \mu_i(1){]} = 0, \nonumber \\
&&[S_i\circ \sigma_i(p_j){]} - {[}S_i\circ \mu_i(p_j){]}=0, \quad j=1, 2, ..., s,\  j\neq i. \nonumber
\eneq


Consider the direct sum maps
$$\tilde{\sigma}:=(\bigoplus_{r_i>0} S_i\circ \sigma_i)\oplus (\bigoplus_{r_i<0} S_i\circ \mu_i)$$
and
$$\tilde{\mu}:=(\bigoplus_{r_i>0} S_i\circ \mu_i)\oplus (\bigoplus_{r_i<0} S_i\circ \sigma_i).$$
It follows from \eqref{05-2} and \eqref{05-3} that
$$[\tilde{\sigma}(p_i)] - [\tilde{\mu}(p_i)]=r_i,\quad i=1, 2, ..., s.$$

Note that
\begin{equation}\label{eq-decp-0}
\sigma = S\circ (\bigoplus_{i=1}^s(\sigma_i\oplus\mu_i)) = \bigoplus_{i=1}^s((S\circ\sigma_i)\oplus(S \circ \mu_i)).
\end{equation}
For each $i=1, 2, ..., s$, since $$\mathrm{tr}(S_i(P))=\mathrm{tr}(P)\cdot\mathrm{tr}(S_i(1\otimes e))=\mathrm{tr}(P)\cdot\mathrm{tr}(R_i)=\mathrm{tr}(P)|r_i| <  \mathrm{tr}(P)\delta = \mathrm{tr}(R)=\mathrm{tr}(S(P)),$$
there is a rescaling homomorphism $T_i: Q\otimes \mathcal K \to Q\otimes\mathcal K$ such that
$$[S]=[S_i] + [T_i] = [S_i\oplus T_i]\quad\mathrm{on}\  \Kzero(Q)\cong \Ratn.$$
Therefore, by \eqref{eq-decp-0}, on $G_0$,
\begin{eqnarray*}
[\sigma] & = & \sum_{i=1}^s([(S\circ\sigma_i)] +[(S \circ \mu_i))]) \\
 & = & \sum_{i=1}^s([S_i\circ\sigma_i] + [T_i\circ\sigma_i]) + \sum_{i=1}^s ([S_i\circ \mu_i ] + [T_i \circ \mu_i]) \\
 & = & [(\bigoplus_{r_i>0}((S_i\circ\sigma_i) \oplus (T_i\circ\sigma_i)))\oplus (\bigoplus_{r_i\leq 0}((S_i\circ\sigma_i) \oplus (T_i\circ\sigma_i))) \\  &    & \oplus (\bigoplus_{r_i<0} ((S_i\circ \mu_i )\oplus (T_i \circ \mu_i))) \oplus (\bigoplus_{r_i \geq 0} ((S_i\circ \mu_i )\oplus (T_i \circ \mu_i)))] \\
 & = & [\tilde{\sigma}] \oplus [\gamma],
\end{eqnarray*} 
where $$\gamma= (\bigoplus_{r_i>0}(T_i\circ\sigma_i))\oplus (\bigoplus_{r_i\leq 0}((S_i\circ\sigma_i) \oplus (T_i\circ\sigma_i))) \oplus (\bigoplus_{r_i<0}  (T_i \circ \mu_i)) \oplus (\bigoplus_{r_i \geq 0} ((S_i\circ \mu_i )\oplus (T_i \circ \mu_i))).$$


%

Consider the direct sum completely positive map
$$\mu:=\tilde{\mu}\oplus \gamma.$$
We have
$$[\sigma(p_i)] - [\mu(p_i)]=[\tilde{\sigma}(p_i)] - [\tilde{\mu}(p_i)]=r_i,\quad i=1, 2, ..., s,$$
as desired (with $\mu$ regarded as a map from $A$ to $Q$, as $\mu(1)=\sigma(1)$).
\end{proof}

\begin{rem}
The assumption that $A$ be amenable in Lemma \ref{0.5}  can be removed.
In the proof one can apply Theorem 5.5 of \cite{DE}
in place of Theorem 5.9 of \cite{LinTAF2}.
\end{rem}

 Let $l, r=1, 2,...$ be given. In the rest of the paper,
we identify $\Kzero(Q^l)$ with $\Ratn^l$ (and $\Kzero(Q^r)$ with $\Ratn^r$) by identifying $[1_{Q^l}]$ with $(\underbrace{1, 1, ..., 1}_l)$ and ($[1_{Q^r}]$ with $(\underbrace{1, 1, ..., 1}_r)$), where $Q^l=\underbrace{Q\oplus\cdots\oplus Q}_l$ and $Q^r=\underbrace{Q\oplus\cdots\oplus Q}_r$. If $\psi: Q^l \to Q^r$ are unital, then
$$(\psi)_{*0}(\underbrace{1, 1, ..., 1}_l)=(\underbrace{1,1, ..., 1}_r),$$ and therefore
\begin{equation}\label{mul-unit}
(\psi)_{*0}(\underbrace{t, t, ..., t}_l)=(\underbrace{t, t, ..., t}_r),\quad t\in \Q.
\end{equation}


\begin{lem}\label{0.6}
Let $A$ be a unital simple  separable  amenable quasidiagonal C*-algebra satisfying the UCT.
Assume that $A\cong A\otimes Q.$

Let ${\mathcal G}$ be a finite subset of $A$, let $\ep_1, \ep_2>0,$ and let ${{p_1, p_2,..., p_s}}\in A$
be projections such that $[1_A], [p_1], [p_2],...,[p_s]\in \Kzero(A)$ are $\Q$-linearly independent.
There exists  $\dt>0$ satisfying the following condition.

Let $\psi_k: Q^l \to Q^r,$ $k=0,1,$ be unital homomorphisms, where  $l, r=1, 2, ...$ .
Set
$$\D=\{x\in \Q^l: (\psi_0)_{*0}(x)=(\psi_1)_{*0}(x)\}\subseteq \Q^l.$$
{There exists
a}  ${\mathcal G}$-$\ep_1$-multiplicative completely positive map $\Sigma: A\to Q^l$, such that $\Sigma(1_A)$ is a projection, with the following properties:
$$\tau(\Sigma(1_A))<\ep_2,\quad \tau \in {\rm T}(Q^l),$$
$$ [\Sigma(1_A)],\ [\Sigma(p_j)]\in  \D, \quad j=1,2,...,s,$$
 and, for any $r_1, r_2,...,r_s\in \Q^r$ satisfying
$$
|r_{i,j}|<\dt,
$$
where $r_i=(r_{i,1},r_{i,2},...,r_{i,r}),$ $i=1,2,...,s,$
there is a ${\mathcal G}$-$\ep_1$-multiplicative completely positive map $\mu: A\to Q^r$, with $\mu(1_A)$ a projection, such that
$$ [\psi_0\circ \Sigma(p_i)]-[\mu(p_i)]
=r_i, \quad i=1,2,...,s,$$
and $$[\mu(1_A)]=[\psi_0\circ \Sigma(1_A)].
$$
\end{lem}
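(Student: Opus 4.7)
The plan is to reduce directly to Lemma \ref{0.5} by taking $\Sigma$ to be a \emph{diagonal} embedding, so that the K-theory classes $[\Sigma(1_A)]$ and $[\Sigma(p_j)]$ sit in the ``diagonal'' copy of $\Q$ inside $\Q^l=\Kzero(Q^l)$. By \eqref{mul-unit}, this diagonal copy lies inside $\Delta$ for \emph{every} pair of unital \hm s $\psi_0,\psi_1: Q^l\to Q^r$, so the requirement $[\Sigma(1_A)], [\Sigma(p_j)]\in\Delta$ becomes independent of $\psi_0,\psi_1,l,r$. This also explains how $\dt$ can be chosen before any of $\psi_0,\psi_1,l,r$ are given.

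First I apply Lemma \ref{0.5} with the given data $(\mathcal{G},\ep_1,\ep_2,p_1,\ldots,p_s)$ to produce a $\mathcal{G}$-$\ep_1$-multiplicative completely positive map $\sg:A\to Q$ with $\sg(1_A)$ a projection satisfying $\mathrm{tr}(\sg(1_A))<\ep_2$, together with a constant $\dt_0>0$ as in the conclusion of Lemma \ref{0.5}. Take $\dt:=\dt_0$ as the constant required here. Define
$$\Sigma:=\sg\oplus\sg\oplus\cdots\oplus\sg:A\lr Q^l\quad(l\text{ summands}).$$
Setting $t_0:=[\sg(1_A)]$ and $t_j:=[\sg(p_j)]$ in $\Q$, one has $[\Sigma(1_A)]=(t_0,\ldots,t_0)$ and $[\Sigma(p_j)]=(t_j,\ldots,t_j)$, which by \eqref{mul-unit} lie in $\Delta$. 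Since $\Sigma(1_A)$ has the same scalar trace in every coordinate of $Q^l$, every tracial state on $Q^l$ takes the value $\mathrm{tr}(\sg(1_A))<\ep_2$ on $\Sigma(1_A)$, yielding the required tracial bound.

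Now, given $r_1,\ldots,r_s\in\Q^r$ with $|r_{i,j}|<\dt$, I apply the second half of Lemma \ref{0.5} once per coordinate of $Q^r$: for each fixed $j\in\{1,\ldots,r\}$, the $s$-tuple $(r_{1,j},\ldots,r_{s,j})$ has every entry in absolute value less than $\dt_0$, so there is a $\mathcal{G}$-$\ep_1$-multiplicative completely positive map $\mu_j:A\to Q$ with $\mu_j(1_A)=\sg(1_A)$ and $[\sg(p_i)]-[\mu_j(p_i)]=r_{i,j}$ for $i=1,\ldots,s$. Set $\mu:=\mu_1\oplus\cdots\oplus\mu_r:A\to Q^r$. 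Then $\mu(1_A)$ is a projection of class $(t_0,\ldots,t_0)\in\Q^r$, and by \eqref{mul-unit} again
$$[\psi_0\circ\Sigma(1_A)]=(\psi_0)_{*0}(t_0,\ldots,t_0)=(t_0,\ldots,t_0)=[\mu(1_A)].$$
Likewise $[\psi_0\circ\Sigma(p_i)]=(t_i,\ldots,t_i)$, so $[\psi_0\circ\Sigma(p_i)]-[\mu(p_i)]=(r_{i,1},\ldots,r_{i,r})=r_i$, as required.

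There is no real obstacle: the entire content is the observation that any ``diagonal'' K-class in $\Q^l$ is fixed by $(\psi_k)_{*0}$ for either $k=0,1$, which simultaneously puts $\Sigma$ into $\Delta$ and makes $[\psi_0\circ\Sigma(-)]$ computable without knowing $\psi_0$. This reduces the multi-component K-theoretic lifting problem to $r$ independent applications of Lemma \ref{0.5} with the same $\sg$ and the same $\dt_0$.
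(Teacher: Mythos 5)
Your proposal is correct and follows essentially the same route as the paper: apply Lemma \ref{0.5} once to get $\sigma$ and $\dt$, take $\Sigma$ to be the $l$-fold diagonal sum of $\sigma$ (so that the relevant classes are scalar multiples of the diagonal, hence fixed by $(\psi_k)_{*0}$ by \eqref{mul-unit} and therefore in $\D$), and then apply Lemma \ref{0.5} coordinatewise to build $\mu=\mu_1\oplus\cdots\oplus\mu_r$. The coordinatewise computation of $[\psi_0\circ\Sigma(p_i)]-[\mu(p_i)]=r_i$ and of $[\psi_0\circ\Sigma(1_A)]=[\mu(1_A)]$ is exactly the paper's argument.
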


\begin{proof}
Put $p_0=1_A$ and ${\mathcal P}=\{[1_A], [p_1],...,[p_s]\}.$
Applying Lemma \ref{0.5}, we obtain a ${\mathcal G}$-$\ep_1$-multiplicative $\sigma: A\to Q$ and $\delta>0$
satisfying the conclusion of Lemma \ref{0.5} with respect to $\mathcal G$, $\eps_1$, $\eps_2$, and $\mathcal P$.

Let us show that $\delta$ is as desired.

For a given integer $l=1, 2, ...$, consider the map $\Sigma: A\to  Q^l$, the sum of $l$ copies of $\sigma$,
$$\Sigma=\sigma \oplus \sigma \oplus \dots \oplus \sigma.$$
Let us show that $\Sigma$ has the required properties.
Let $r=1, 2,$ ... and $\psi_k: Q^l\to Q^r$, $k=0, 1$, be given (as in the statement of the condition on $\delta$ to be verified).
Since $\psi_0$ and $\psi_1$ are assumed to be unital, $[1_{Q^r}]=(1, 1, ..., 1)\in \D.$
It then follows that
$$[\Sigma(p_i)]=([\sigma(p_i)], [\sigma(p_i)], ..., [\sigma(p_i)])=[\sigma(p_i)](1, 1, ..., 1) \in \D,\quad i=0, 1,2, ..., s,$$
where $[\sigma(p_i)]$ is regarded as a rational number.
In other words,
$$[\psi_0\circ \Sigma(p_i)]=[\psi_1\circ \Sigma(p_i)],\quad i=0, 1,2,...,s.$$

Since $\mathrm{tr}([\sigma(1_A)])<\eps_2$, one has
$$\tau([\Sigma(1_A)])<\eps_2,\quad  \tau\in {\rm T}(Q^l).$$


Let $r_1, r_2,...,r_s\in \Q^r$ be given such that $|r_{i,j}|<\dt,$ $j=1,2,...,r$ and $i=1,2,...,s.$ Let us show that $\mu$ exists as required.

Fix $j=1, 2, ..., r$ and let $\mu_j: A\to Q$ (in place of $\mu$) denote the ${\mathcal G}$-$\ep_1$-multiplicative completely positive map
given by Lemma \ref{0.5}
for the $s$-tuple $r_{1,j}, r_{2,j},...,r_{s,j}.$
That is, $\mu_j(1_A)=\sigma(1_A)$, {{and}}
\begin{equation}\label{cord-diff}
[\sigma(p_i)]-[\mu_j(p_i)]=r_{i,j} \in \Kzero(Q),\quad i=1,2,..., s.
\end{equation}
%
%
%
Define $\mu: A\to Q^r$  by
$$\mu(a)=(\underbrace{\mu_1(a), \mu_2(a),..., \mu_r(a)}_r), \quad a \in A.$$

Then, for each $i=1, 2, ..., s$,
\begin{eqnarray*}
[\psi_0\circ\Sigma(p_i)]-[\mu(p_i)] & = & (\psi_0)_{*0} (\underbrace{[\sigma(p_i)], [\sigma(p_i)], ..., [\sigma(p_i)]}_l) - (\underbrace{[\mu_1(p_i)], [\mu_2(p_i)],..., [\mu_r(p_i)]}_r) \\
& = & (\underbrace{[\sigma(p_i)], [\sigma(p_i)], ..., [\sigma(p_i)]}_r) - (\underbrace{[\mu_1(p_i)], [\mu_2(p_i)],..., [\mu_r(p_i)]}_r) \quad \textrm{(by \eqref{mul-unit})} \\
& = & (([\sigma(p_i)] - [\mu_1(p_i)]), ([\sigma(p_i)] - [\mu_2(p_i)]), ..., ([\sigma(p_i)]- [\mu_r(p_i)]))\\
& = & (r_{i, 1}, r_{i, 2}, ..., r_{i, r}) = r_i\quad\quad\textrm{(by \eqref{cord-diff})},
\end{eqnarray*}
as desired. A similar computation shows that $[\psi_0\circ\Sigma(1_A)]=[\mu(1_A)]$.
\end{proof}

\begin{lem}\label{0.7}

Let $A$ be a unital simple separable  amenable quasidiagonal C*-algebra satisfying the UCT.
Assume that $A\cong A\otimes Q.$

Let ${\mathcal G}\subseteq A$ be a finite subset, let $\ep_1, \ep_2>0$ and let $p_1, p_2,..., p_s \in A$
be
projections such that $[1_A], [p_1], [p_2],...,[p_s]\in \Kzero(A)$ are $\Q$-linearly independent.
Then there exists $\dt>0$ satisfying the following condition.

 Let $\psi_k: Q^l\to Q^r$, $k=0, 1$, be unital homomorphisms, where $l, r=1, 2, ...$.
Set $$\D=\{x\in \Q^l: (\psi_0)_{*0}(x)=(\psi_1)_{*0}(x)\}\subseteq \Q^l.$$
{There exists
a}
${\mathcal G}$-$\ep_1$-multiplicative completely positive map $\Sigma: A\to Q^l$, such that $\Sigma(1_A)$ is a projection, with the following properties:
$$\tau(\Sigma(1_A))<\ep_2,\quad \tau \in {\rm T}(Q^l),$$
$$[\Sigma(1_A)],\  [\Sigma(p_i)]\in  \D,\quad  i=1,2,...,s,$$
and, for any $r_1, r_2,...,r_s\in \Q^l$ satisfying
$$
|r_{i,j}|<\dt,
$$
where $r_i=(r_{i,1},r_{i,2},...,r_{i,l}),$ $i=1,2,...,s,$
there is a ${\mathcal G}$-$\ep_1$-multiplicative completely positive map $\mu: A\to  Q^l$, with $\mu(1_A)=\Sigma(1_A)$, such that
\beq\nonumber
[\Sigma(p_i)]-[\mu(p_i)]=r_i,\quad {{i}}=1,2,...,s.
\eneq

\end{lem}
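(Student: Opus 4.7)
The plan is to reduce this directly to Lemma \ref{0.5}, in a way parallel to but simpler than the proof of Lemma \ref{0.6}: the only real difference between the present statement and Lemma \ref{0.6} is that the codomain of $\mu$ is $Q^l$ (not $Q^r$), the perturbations $r_i$ live in $\Q^l$ (not $\Q^r$), and the comparison is with $\Sigma$ itself (not with $\psi_0\circ\Sigma$). As a consequence, there is no need to compose with $\psi_0$ at the end, and we can simply do the Lemma \ref{0.5} construction coordinate-by-coordinate in $Q^l$.

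First I would apply Lemma \ref{0.5} to the data $\mathcal{G}$, $\ep_1$, $\ep_2$, and $\mathcal{P}=\{[1_A],[p_1],\dots,[p_s]\}$ to obtain a $\mathcal{G}$-$\ep_1$-multiplicative completely positive map $\sigma\colon A\to Q$ and a constant $\dt>0$. This $\dt$ will be the $\dt$ in the statement; note that it can be chosen independently of $l$ and of the homomorphisms $\psi_0,\psi_1$. Next I would take $\Sigma\colon A\to Q^l$ to be the $l$-fold diagonal sum $\Sigma:=\sigma\oplus\sigma\oplus\cdots\oplus\sigma$. Then $\Sigma(1_A)$ is a projection, $\tau(\Sigma(1_A))=\mathrm{tr}(\sigma(1_A))<\ep_2$ for every $\tau\in\mathrm{T}(Q^l)$, and each class $[\Sigma(p_i)]=[\sigma(p_i)]\,(1,1,\dots,1)\in\Q^l$ is a scalar multiple of $(1,\dots,1)$. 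Since $\psi_0,\psi_1$ are unital, $(1,\dots,1)\in\D$, and hence the entire $\Q$-line through $(1,\dots,1)$ lies in $\D$; in particular $[\Sigma(1_A)],[\Sigma(p_i)]\in\D$.

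Finally, given $r_i=(r_{i,1},\dots,r_{i,l})\in\Q^l$ with $|r_{i,j}|<\dt$, I would, for each coordinate $j=1,\dots,l$, apply the conclusion of Lemma \ref{0.5} to the $s$-tuple $(r_{1,j},\dots,r_{s,j})$ to produce a $\mathcal{G}$-$\ep_1$-multiplicative completely positive map $\mu_j\colon A\to Q$ with $\mu_j(1_A)=\sigma(1_A)$ and $[\sigma(p_i)]-[\mu_j(p_i)]=r_{i,j}$ for $i=1,\dots,s$. Setting $\mu:=(\mu_1,\dots,\mu_l)\colon A\to Q^l$ then gives a $\mathcal{G}$-$\ep_1$-multiplicative completely positive map with $\mu(1_A)=(\sigma(1_A),\dots,\sigma(1_A))=\Sigma(1_A)$, and coordinate-by-coordinate
$$
[\Sigma(p_i)]-[\mu(p_i)]=\bigl([\sigma(p_i)]-[\mu_1(p_i)],\dots,[\sigma(p_i)]-[\mu_l(p_i)]\bigr)=(r_{i,1},\dots,r_{i,l})=r_i,
$$
as required.

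There is no real obstacle here; the lemma is essentially a restatement of Lemma \ref{0.5} that respects the diagonal embedding $Q\hookrightarrow Q^l$. The only point deserving attention is the verification that the $K_0$-classes of the $\Sigma(p_i)$ lie in the equalizer $\D$ of $(\psi_0)_{*0}$ and $(\psi_1)_{*0}$, and this is forced by the unitality of $\psi_0,\psi_1$ together with the fact that the construction of $\Sigma$ produces only ``diagonal'' classes.
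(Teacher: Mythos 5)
Your proposal is correct and follows essentially the same route as the paper's own proof: apply Lemma \ref{0.5} to get $\sigma$ and $\dt$, take $\Sigma$ to be the $l$-fold diagonal sum of $\sigma$ (so the classes $[\Sigma(p_i)]$ are scalar multiples of $(1,\dots,1)\in\D$), and build $\mu$ coordinate-by-coordinate from Lemma \ref{0.5} applied to the tuples $(r_{1,j},\dots,r_{s,j})$. No issues to flag.
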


\begin{proof}
This is similar to the proof of Lemma \ref{0.6}.
Put $[p_0]=[1_A]$ and ${\mathcal P}=\{[p_0], [p_1],...,[p_s]\}.$
Applying Lemma \ref{0.5}, we obtain a ${\mathcal G}$-$\ep_1$-multiplicative $\sigma: A\to Q$ and $\delta>0$
satisfying the conclusion of Lemma \ref{0.5} with respect to $(\mathcal G, \eps_1, \eps_2)$.

Let us show that $\delta$ is as desired.

Consider the map $\Sigma: A\to Q^l$, the sum of $l$ copies of $\sigma$,
$$\Sigma=\sigma\oplus\sigma\oplus\cdots\oplus\sigma,$$
for a given $l=1, 2, ...$ .
Then the same argument as that of {{Lemma \ref{0.6}}} 
shows that
$$[\Sigma(p_i)]\in \D,\quad i=0,1,2,...,s.$$
It is also clear that
$$\tau(\Sigma(1_A))<\eps_2,\quad \tau\in\mathrm{T}(Q^l).$$



Let $r_1, r_2,...,r_k\in \Q^l$ be given
such that $|r_{i,j}|<\dt,$ $j=1,2,...,l$ and $i=1,2,...,s.$ Let us show that $\mu$ exists as required.

Fix $j=1, 2, ..., r$ and let $\mu_j: A\to Q$ (in place of $\mu$) denote the ${\mathcal G}$-$\ep_1$-multiplicative completely positive map
given by Lemma \ref{0.5}
for the $s$-tuple $r_{1,j}, r_{2,j},...,r_{s,j}.$
That is, $\mu_j(1_A)=\sigma(1_A)$, and
\begin{equation}\label{cord-diff-1}
[\sigma(p_i)]-[\mu_j(p_i)]=r_{i,j} \in \Kzero(Q),\quad i=1,2,..., s.
\end{equation}
%
%
%
Define $\mu: A\to Q^l$  by
$$\mu(a)=(\underbrace{\mu_1(a), \mu_2(a),..., \mu_r(a)}_l), \quad a \in A.$$

Then, for each $i=1, 2, ..., s$,
\begin{eqnarray*}
[\Sigma(p_i)]-[\mu(p_i)] & = & (\underbrace{[\sigma(p_i)], [\sigma(p_i)], ..., [\sigma(p_i)]}_l) - (\underbrace{[\mu_1(p_i)], [\mu_2(p_i)],..., [\mu_r(p_i)]}_l) \\
& = & (([\sigma(p_i)] - [\mu_1(p_i)]), ([\sigma(p_i)] - [\mu_2(p_i)]), ..., ([\sigma(p_i)]- [\mu_r(p_i)]))\\
& = & (r_{i, 1}, r_{i, 2}, ..., r_{i, l}) = r_i\quad\quad\textrm{(by \eqref{cord-diff-1})},
\end{eqnarray*}
as desired. Moreover, since $\mu_j(1_A)=\sigma(1_A),$ we have
$\Sigma(1_A)=\mu(1_A).$
\end{proof}

\section{The main result}
Let us begin by recalling the (stable) uniqueness result used in \cite{EN-K0-Z}:
\begin{lem}[Corollary 2.6 of \cite{EN-K0-Z}; see also Lemma 4.14 of \cite{GLNI}, and Definition 5.6   and  Theorem 5.9 of \cite{Linuct}]\label{stable-uniq-Q}
Let $A$ be a unital simple separable amenable C*-algebra which {{satisfies}} the UCT. Assume that $A\cong A\otimes Q.$

 For any $\ep>0,$ any finite subset $\mathcal F$ of $A$,
 there exist $\dt>0,$
 a finite subset {{${\cal G}$ of $A$}}, a finite subset ${\cal P}$ of projections in $A$,
 and an integer $n\in \N$ 
 with the following property.

 For any three completely positive contractions $\phi, \psi, \xi: A\to Q$ which are
 ${\cal G}$-$\dt$-multiplicative,
with $\phi(1)=\psi(1)=1_Q-\xi(1)$ a projection, $[\phi(p)]_0=[\psi(p)]_0$ in $\Kzero(Q)$ for all $p\in\mathcal P$, and $\mathrm{tr}(\phi(1))=\mathrm{tr}(\psi(1))<1/n$, where $\mathrm{tr}$ is the unique tracial state of $Q$, there exists a unitary $u\in Q$ such that
$$
\norm{u^* (\phi(a)\oplus\xi(a))u-\psi(a)\oplus\xi(a)}<\eps,\quad a\in\mathcal F.
$$
\end{lem}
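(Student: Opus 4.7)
The plan is to deduce this lemma from the standard stable uniqueness theorem of Lin (Theorem 5.9 of \cite{Linuct}, which is one of the references given), by showing that under the hypothesis $A\cong A\otimes Q$ the KL-data controlling approximate unitary equivalence collapses to the single piece of information $[\phi(p)]_0=[\psi(p)]_0$ for projections $p$ in a suitably chosen finite subset $\mathcal P$.

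First I would spell out the KL-theoretic simplification. Since $A$ satisfies the UCT and $K_0(Q)=\Q$, $K_1(Q)=0$, the UCT sequence gives
$$
KK(A,Q)=\Hom(K_0(A),\Q),
$$
the Ext term vanishing because $\Q$ is divisible. For the same reason, the Bockstein K-theory with $\Z/k$-coefficients of $Q$ vanishes, so KK and KL coincide on $(A,Q)$ and every KL-class is determined by its induced map on $K_0(A)$ alone. Moreover, because $A\cong A\otimes Q$, the group $K_0(A)$ is a $\Q$-vector space, so a homomorphism $K_0(A)\to\Q$ is determined by its values on a $\Q$-linear basis, and in particular on the finitely many classes $[p]$, $p\in\mathcal P$, needed to span a sufficiently large rational subspace. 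The plan is therefore, given $\ep$ and $\mathcal F$, to invoke Theorem 5.9 of \cite{Linuct} to produce a finite $\mathcal G_0\supset\mathcal F$, a $\dt_0>0$, and a finite subset $\mathcal P_0\subset K_0(A)$ such that agreement of KL-classes on $\mathcal P_0$ (together with approximate multiplicativity) yields the desired approximate unitary equivalence after stabilization by any sufficiently large completely positive map; then choose $\mathcal P$ to be a set of projections in $A$ whose classes contain $\mathcal P_0$.

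Next I would handle the stabilization aspect, which is where the trace condition $\mathrm{tr}(\phi(1))=\mathrm{tr}(\psi(1))<1/n$ enters. Stable uniqueness theorems in this setting typically ask that the stabilizing map $\xi$ dominate $\phi,\psi$ in a suitable Cuntz-sense, i.e.\ that $n$ copies of the projection $\phi(1)$ be subequivalent to $\xi(1)$ in $Q$; since Cuntz comparison in $Q$ is governed by the trace, the inequality $\mathrm{tr}(\phi(1))<1/n$ forces $n\cdot[\phi(1)]\le[\xi(1)]=[1_Q-\phi(1)]$ in the ordered group $K_0(Q)=\Q$. Choosing $n$ to be the integer produced by Theorem 5.9 of \cite{Linuct} for the pair $(\mathcal F,\ep)$ then gives exactly the numerical room needed to apply the cited stable uniqueness theorem in the corner $(\phi(1)\oplus\xi(1))Q(\phi(1)\oplus\xi(1))\cong Q$, producing the unitary $u$.

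The main obstacle I anticipate is the passage from an abstract finite subset of $K_0(A)$ (as produced by Theorem 5.9 of \cite{Linuct}) to a finite subset $\mathcal P$ of actual projections in $A$. This is not automatic, since a priori classes in $K_0(A)$ are represented by differences of projections in matrix algebras over $A$, not in $A$ itself. Here however $A$ is unital, simple, and $Q$-stable, hence has real rank zero and strict comparison, so every class in $K_0(A)$ is a difference of classes of projections in $A$; one then enlarges $\mathcal P$ to include representatives of both the positive and negative parts, and enlarges $\mathcal G$ and shrinks $\dt$ so that $[\phi(p)]_0,[\psi(p)]_0$ are well defined and additive on the relevant combinations (as in \ref{dpk}). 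Once this bookkeeping is carried out, the lemma follows directly from the cited uniqueness theorem.
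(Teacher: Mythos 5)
The paper itself offers no proof of this lemma: it is quoted verbatim as Corollary 2.6 of \cite{EN-K0-Z}, with the citation to Definition 5.6 and Theorem 5.9 of \cite{Linuct} indicating the route by which it is proved there. Your proposal follows that same route in outline, and the soft parts are fine: since $\Q$ is divisible and $K_*(Q;\Z/k)=0$, the UCT does collapse $\mathrm{KL}(A,Q)=\mathrm{KK}(A,Q)$ to $\Hom(\Kzero(A),\Q)$, so only finitely many $\Kzero$-classes need to be matched, and these can be arranged to come from projections. The genuine gap is in the step where you apply the stable uniqueness theorem. Theorem 5.9 of \cite{Linuct} (like the Dadarlat--Eilers theorem it refines) does not allow an \emph{arbitrary} approximately multiplicative map as the absorbing summand: the stabilizer must be an $n$-fold repetition of maps with quantified fullness properties --- this is precisely what Definition 5.6 of \cite{Linuct} encodes, and why it is cited. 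You replace this hypothesis by the much weaker observation that $\mathrm{tr}(\phi(1))<1/n$ gives ``numerical room,'' i.e.\ that copies of the projection $\phi(1)$ are subequivalent to $\xi(1)$ in $K_0(Q)$. Comparison of the \emph{units} says nothing about the \emph{map} $\xi$: nothing in your outline shows that a general $\mathcal G$-$\delta$-multiplicative c.p.c.\ map $\xi$ with $\xi(1)=1_Q-\phi(1)$ absorbs the difference between $\phi$ and $\psi$. Bridging this --- e.g.\ using simplicity of $A$ (write $1_A=\sum_i x_i^*ax_i$ and enlarge $\mathcal G$, shrink $\delta$, so that $\xi(a)$ is full in $Q$ with uniform constants for the finitely many relevant $a$), and then converting the smallness of $\mathrm{tr}(\phi(1))$ into the $n$-fold absorption hypothesis of the cited theorem --- is exactly the content of Corollary 2.6 of \cite{EN-K0-Z}, and it is absent from your argument.

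Two smaller points. First, $\mathrm{tr}(\phi(1))<1/n$ only guarantees that $n-1$ (not $n$) mutually orthogonal copies of $\phi(1)$ sit under $\xi(1)$; this is harmless (adjust the integer), but as written the inequality $n\cdot[\phi(1)]\le[\xi(1)]$ is not justified. Second, the claim that $A\cong A\otimes Q$ forces real rank zero is false: $Q$-stable simple unital C*-algebras may have a large trace simplex and too few projections to have real rank zero. You do not need real rank zero for the bookkeeping you want --- since $A$ is simple, $\Kzero(A)$ is simply ordered with order unit $[1_A]$, and one can either take $\mathcal P$ inside matrix algebras over $A$ or use stable rank one of $A\otimes Q$ (in the finite case) to represent the finitely many classes by projections in $A$ --- but the assertion as stated should be removed.
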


The following two existence results are related to Lemma 16.9 of \cite{GLNI} (and its proof).
%

We will use the following known facts:
$Q\otimes Q\cong Q,$ and
any unital endomorphism $\phi:Q\to Q$ is approximately unitarily equivalent to the identity map.
\begin{lem}\label{partuniq}
Let $A$ be a unital simple separable amenable C*-algebra
which satisfies the UCT. Assume that $A\otimes Q\cong A.$

For any $\ep>0$ and any finite subset ${\mathcal F}$ of $A$, there exist
$\dt>0,$   a finite subset ${\mathcal G}$ of $A$,
and a finite subset ${\mathcal P}$ of projections in $A$ with the following property.

Let $\psi,\phi: A\to Q$ be two unital ${\mathcal G}$-$\dt$-multiplicative completely positive maps such that
$$[\psi]|_{\mathcal P}=[\phi]|_{\mathcal P}.$$
Then there {are} a unitary $u\in Q$ and a  unital ${\mathcal F}$-$\ep$-multiplicative completely positive map $L: A\to {\rm C}([0,1], Q)$
such that
\begin{equation}\label{puni-2}
 \pi_0\circ L=\psi \quad\textrm{and}\quad \pi_1\circ L={\rm Ad}\, u\circ \phi.
\end{equation}
Moreover, if
\begin{equation}\label{puniq-3}
 |\mathrm{tr}\circ \psi(h)-\mathrm{tr}\circ \phi(h)|<{{\eps',}}\quad h\in {\mathcal H},
\end{equation}
for a finite set ${\mathcal H}\subseteq A$ and $\eps'>0$,
then $L$ may be chosen such that
\begin{equation}\label{puniq-4}
 |\mathrm{tr}\circ \pi_t\circ L(h)-\mathrm{tr}\circ \pi_0\circ L(h)|<\eps', \quad h\in {\mathcal H},\ t\in [0, 1].
\end{equation}
Here, $\pi_t: {\rm C}([0,1], Q)\to Q$ is the point evaluation at $t\in [0,1].$
\end{lem}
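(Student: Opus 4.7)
The plan is to deduce this partial-uniqueness result from the stable uniqueness Lemma~\ref{stable-uniq-Q} combined with the path-connectedness of the unitary group of $Q$. The strategy is first to ``stabilize'' $\psi$ and $\phi$ by absorbing a common large summand $\xi$, then apply Lemma~\ref{stable-uniq-Q} to produce a single unitary $W$ implementing approximate equivalence on the stabilized maps, and finally interpolate $W$ to $1_Q$ through a continuous unitary path, which manufactures the desired homotopy $L$.

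More concretely, I would first apply Lemma~\ref{stable-uniq-Q} with tolerance $\epsilon/4$ and the finite set $\mathcal F$ to obtain $\delta_0$, $\mathcal G_0$, $\mathcal P_0$, and an integer $n$, and take $\delta$, $\mathcal G$, $\mathcal P$ of the present lemma to majorize these (with additional slack for the normalization step). Pick a projection $e \in Q$ with $\mathrm{tr}(e) = 1/n$, and, using the isomorphism $A \cong A \otimes Q$, construct a $\mathcal G_0$-$\delta_0$-multiplicative unital map $\xi : A \to (1-e)Q(1-e)$ together with unitaries $V_\psi, V_\phi \in Q$ such that $V_\psi^* \psi(\cdot) V_\psi$ and $V_\phi^* \phi(\cdot) V_\phi$ are approximately of the form $\psi_0 \oplus \xi$ and $\phi_0 \oplus \xi$ on $\mathcal F$, where $\psi_0, \phi_0 : A \to eQe$ are the respective compressions. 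The hypothesis $[\psi]|_\mathcal P = [\phi]|_\mathcal P$ then descends to $[\psi_0]|_{\mathcal P_0} = [\phi_0]|_{\mathcal P_0}$ in $\mathrm{K}_0(eQe)$, and $\mathrm{tr}(\psi_0(1)) = 1/n$. Lemma~\ref{stable-uniq-Q} thus yields a unitary $W \in Q$ with
$$\|W^*(\psi_0(a) \oplus \xi(a)) W - (\phi_0(a) \oplus \xi(a))\| < \epsilon/4,\quad a \in \mathcal F.$$
Since $U(Q)$ is path-connected, pick a continuous path $W_t$ in $U(Q)$ from $V_\psi$ to $V_\phi W$ and define $L(a)(t) := W_t^*(\psi_0(a) \oplus \xi(a)) W_t$. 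Then $\pi_0 \circ L = V_\psi^*(\psi_0 \oplus \xi) V_\psi$, which equals $\psi$ on the nose after a small initial unitary correction, and $\pi_1 \circ L$ equals $\mathrm{Ad}(V_\phi W V_\psi^*) \circ \phi$ in the same sense, delivering the required $u \in Q$.

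For the moreover clause, note that each $\pi_t \circ L$ is a unitary conjugate of the fixed cpc map $\psi_0 \oplus \xi$, so $\mathrm{tr}(\pi_t \circ L(h))$ is independent of $t$, and hence (\ref{puniq-4}) holds trivially once the endpoint $\pi_0 \circ L = \psi$ is exact---so the hypothesis (\ref{puniq-3}) is only needed to the extent that it allows one to match the traces of $\psi$ and $\phi$ at the outset. The main obstacle I anticipate is converting the approximate decompositions $V_\psi^*\psi V_\psi \approx \psi_0 \oplus \xi$ and $V_\phi^*\phi V_\phi \approx \phi_0 \oplus \xi$ into exact identities, so that the endpoint conditions (\ref{puni-2}) hold on the nose; this is typically handled by prescribing the $\mathrm{K}_0$-classes of $\xi$ and its complement via the existence results of Section~3 (cf.\ Lemma~\ref{0.5}) and then applying small unitary corrections near $t=0$ and $t=1$ to absorb the stabilization discrepancy. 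Approximate multiplicativity of $L$ on $\mathcal F$ with tolerance $\epsilon$ then follows from that of $\psi_0 \oplus \xi$, which is inherited from the approximate multiplicativity of $\psi$, $\phi$, and the chosen $\xi$.
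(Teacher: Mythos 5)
Your plan hinges on a step that is not available: the construction of a \emph{single} map $\xi: A\to (1-e)Q(1-e)$ serving as a common large summand of both $\psi$ and $\phi$, i.e., unitaries $V_\psi, V_\phi$ with $V_\psi^*\psi V_\psi\approx \psi_0\oplus\xi$ and $V_\phi^*\phi V_\phi\approx \phi_0\oplus\xi$ for the \emph{same} $\xi$. Using $Q\cong Q\otimes Q$ one can indeed decompose each map separately, e.g.\ $\psi\approx_{\mathcal F}\mathrm{Ad}\,V_\psi\circ((\psi\otimes e)\oplus(\psi\otimes(1-e)))$, but the large piece is then $\psi\otimes(1-e)$ for $\psi$ and $\phi\otimes(1-e)$ for $\phi$ --- two different maps. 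Asserting that these large pieces can be taken equal (or approximately unitarily equivalent) is essentially the uniqueness statement you are trying to prove, so the argument is circular at its crucial point; neither $A\cong A\otimes Q$ nor the hypothesis $[\psi]|_{\mathcal P}=[\phi]|_{\mathcal P}$ produces such a common $\xi$. The paper circumvents exactly this by a staircase: it replaces $\psi,\phi$ by $\bigoplus_{i=0}^n\psi\otimes e_i$ and $\bigoplus_{i=0}^n\phi\otimes e_i$ and passes from one to the other through the intermediate maps $\Phi_i=\phi_0\oplus\cdots\oplus\phi_{i-1}\oplus\psi_i\oplus\cdots\oplus\psi_n$; consecutive maps share a large common summand \emph{by construction}, so Lemma \ref{stable-uniq-Q} applies $n+1$ times (one unitary per step), and the path $L$ is obtained by concatenating straight-line interpolations between the successively conjugated maps.

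Two further points would still need repair even if a common $\xi$ were granted. First, your path $t\mapsto W_t^*(\psi_0\oplus\xi)W_t$ only matches $\psi$ and $\mathrm{Ad}\,u\circ\phi$ approximately at the endpoints, and ``small unitary corrections'' cannot upgrade approximate equality of c.p.\ maps on a finite set to the exact equalities \eqref{puni-2}; the workable fix is precisely linear interpolation over short end segments, whose approximate multiplicativity requires closeness on the product set $\{ab: a,b\in\mathcal F\}$ (the paper's $\mathcal F_1$ and the $\ep/4$ bookkeeping), which you have not arranged. Second, your trace argument is internally inconsistent with exact endpoints: if $\mathrm{tr}\circ\pi_t\circ L$ were literally constant in $t$ and $\pi_0\circ L=\psi$, $\pi_1\circ L=\mathrm{Ad}\,u\circ\phi$, then $\mathrm{tr}\circ\psi=\mathrm{tr}\circ\phi$ on $\mathcal H$, which is not assumed --- only \eqref{puniq-3} is. In the paper's construction the trace of $\Phi_i$ is the explicit convex combination $\tfrac{i}{n+1}\,\mathrm{tr}\circ\phi+\tfrac{n+1-i}{n+1}\,\mathrm{tr}\circ\psi$, and \eqref{puniq-4} follows from \eqref{puniq-3} together with the interpolation; some such genuine use of \eqref{puniq-3} along the path is unavoidable.
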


\begin{proof}
This is a direct application of the  stable uniqueness theorem (Corollary 2.6) of \cite{EN-K0-Z}, restated as Lemma \ref{stable-uniq-Q} above.
Let ${\mathcal F}\subseteq A$ be a finite subset and let $\ep>0$ be given.   We may assume that $1_A\in {\mathcal F}$ and every element of ${\mathcal F}$ has norm at most one.
Write  ${\mathcal F}_1=\{ab: a, b\in {\mathcal F}\}$. Note that $\mathcal F\subseteq\mathcal F_1$. 

Let  $\delta$, ${\mathcal G},$ ${\mathcal P},$ and $n$ be as assured by Lemma \ref{stable-uniq-Q} for ${\mathcal F}_1$ and $\ep/4.$ We may also assume that $\mathcal F\subseteq \mathcal G$ and $\delta\leq \eps/4$.

{{Let $\phi', \psi': A\to Q\otimes Q$ be defined by $\phi'(a)=\phi(a)\otimes 1$ and $\psi'(a)=\psi(a)\otimes 1$ for all $a\in A$.}}
Pick mutually equivalent projections
$e_0, e_1,e_2,...,e_n\in {{Q}}$ satisfying $\sum_{i=0}^n e_i=1_{{Q}}.$
Then, consider the maps $\phi_i, \psi_i: A\to {{Q}}\otimes e_i{{Q}}e_i$, $i=0, 1, ..., n$, which are defined by
$$\phi_i(a)=\phi(a)\otimes e_i\quad\mathrm{and}\quad \psi_i(a)=\psi(a)\otimes e_i,\quad a\in A,$$ 
and consider the {{finite sequence of}} maps {{from $A$ to $Q\otimes Q$}}
$$\Phi_{n+1}:={{\phi'}}=\phi_0\oplus \phi_1\oplus\cdots \oplus \phi_n,\quad {{\Phi_0:={{\psi'}}=\psi_0\oplus \psi_1\oplus \cdots \oplus \psi_n,}}$$ and
$$\Phi_i:=\phi_0\oplus\cdots  \oplus \phi_{i-1}\oplus \psi_{i}\oplus \cdots \oplus \psi_{n},\quad i=1,2,...,n.$$
Since $e_i$ is unitarily equivalent to $e_{{0}}$ for all $i$, one has  
$$[\phi_i]|_{\mathcal P}=[\psi_j]|_{\mathcal P},\quad 0\leq i, j\leq n.$$
and in particular, 
\beq\label{puniq-10+}
[\phi_{i}]|_{\mathcal P}=[\psi_{i}]|_{\mathcal P},\quad i=0, 1, .., n.
\eneq

Note that, for each $i=0, 1, ..., n$,
\begin{displaymath}
\begin{array}{lll}
\Phi_i &\sim& \psi_{i}\oplus (\phi_0\oplus \phi_1\oplus \cdots \oplus \phi_{i-1}\oplus  \psi_{i+1}\oplus  \psi_{i+2}\oplus \cdots \oplus \psi_n),
\\
\Phi_{i+1} &\sim& \phi_{i}\oplus (\phi_0\oplus \phi_1\oplus \cdots  \oplus \phi_{i-1}\oplus \psi_{i+1}\oplus  \psi_{i+2}\oplus \cdots \oplus \psi_n),
\\
\end{array}
\end{displaymath}
where $\sim$ denotes the relation of unitary equivalence. In view of this, and \eqref{puniq-10+} {{(identifying $Q\otimes Q$ with $Q$)}}, applying Lemma \ref{stable-uniq-Q} { {to $\phi:=\phi_i$, $\psi:=\psi_i$ and 
$$\xi:=(\phi_0\oplus \phi_1\oplus \cdots \oplus \phi_{i-1}\oplus  \psi_{i+1}\oplus  \psi_{i+2}\oplus \cdots \oplus \psi_n),$$}} we obtain unitaries {{$u_i\in Q\otimes Q$}}, $i=0, 1, ..., n$,  {{with $u_0=1_{Q\otimes Q}$}}
such that
$$\|\Phi_{i+1}(a)-{\rm Ad}u_{i+1}\circ \Phi_i(a)\|<\ep/4,\quad a\in {\mathcal F}_1.$$ Consequently,
\begin{equation}\label{puniq-11}
\|{\tilde \Phi}_{i+1}(a)-{\tilde \Phi}_{i}(a)\|<\ep/4,\quad a\in {\mathcal F}_1,
\end{equation}
where
$${\tilde \Phi}_0:=\Phi_0=\psi \quad\textrm{and}\quad {\tilde \Phi}_{i+1}:={\rm Ad}\,  u_{i}\circ \cdots \circ {\rm Ad}\ u_1 \circ   {\rm Ad}\ u_0\circ \Phi_{i+1},\quad i=0, 1,...,n.$$
Put $t_i=i/(n+1)$, $i=0, 1, ..., n+1$,
and define {{$L': A\to {\rm C}([0,1], Q\otimes Q)$}} by
$$
\pi_t\circ {{L'}}=(n+1)(t_{i+1}-t){\tilde \Phi_i}+(n+1)(t-t_i){\tilde \Phi_{i+1}},\quad t\in [t_i, t_{i+1}],\ i=0,1,...,n.
$$
By construction,
\begin{equation}\label{eq-verification}
\pi_0\circ {{L'}}={\tilde \Phi}_0={{\psi'}}\quad\mathrm{and}\quad \pi_1\circ {{L'}}={\tilde \Phi}_{n+1}={\rm Ad}\, u_n\circ\cdots \circ{ \rm Ad}\ u_1 \circ{ \rm Ad}\ u_0 \circ {{\phi'}}.
\end{equation}
Since $\tilde{\Phi}_i$, $i=0, 1, ..., n$, are $\mathcal G$-$\delta$-multiplicative (in particular $\mathcal F$-$\eps/4$-multiplicative), it follows from \eqref{puniq-11} that {{$L'$}} is ${\mathcal F}$-$\ep$-multiplicative.
{{Let $u'=u_{n}\cdots u_1 u_0$. Then $\pi_0\circ L'=\psi'$ and $\pi_1\circ L'={\rm Ad} u'\circ\psi'$.}}

{{Choose 
an isomorphism  $s: Q\otimes Q \to Q.$   Note that ${\rm tr} (s(a))={\rm tr}(a)$
for all $a\in Q\otimes Q.$
{{Recall that the endomorphism}} $s\circ j: Q\to Q$ is approximately unitary equivalent to ${{{\rm id}_Q}}: Q\to Q$, where $j: Q\to Q\otimes Q$ is defined by $j(x)=x\otimes 1$.
Thus there are two unitaries $w_0, w_1\in Q$ such that
\begin{equation}\label{2023-01-13}\|{\rm Ad}w_0\circ s\circ j(\psi(a))-\psi(a)\|<\ep/4~~\mbox{and}~~\|{\rm Ad}w_1\circ s\circ j(\phi(a))-\phi(a)\|<\ep/4,\end{equation}
for all {{$a\in {\cal F}_1$}}. Note that $j\circ \psi=\psi'$ and $j\circ \phi=\phi'$. Now define $L: A\to {\rm C}([0,1], Q)$ by 
{\small{
{{\begin{equation}\nonumber
L(t) = \left\{\begin{array}{ll}
(1-3t)\psi+ 3t{\rm Ad}w_0 \circ s\circ \psi', & t\in [0,1/3], \\
{\rm Ad} w_0\circ {\rm Ad} (s(u'))\circ s \circ L'(3(t-1/3)), & t\in [1/3,2/3] ,\\
3(t-2/3) {\rm Ad} w_0\circ {\rm Ad} (s(u')) \circ {\rm Ad} w_1^* \circ \phi + (3-3t){\rm Ad} w_0\circ {\rm Ad} (s(u')) \circ \phi' & t\in [2/3, 1].
\end{array}\right.
\end{equation}}}}}

\noindent
Finally let $u=w_0 (s(u'))w_1^*$;}} we have $\pi_0\circ L =\psi$ and $\pi_1\circ L={\rm Ad}u\circ \phi$.
%
It follows from \eqref{2023-01-13} and the choice of $\mathcal F_1$ that $L$ is $\mathcal F$-$\eps$-multiplicative ($L'$ is already $\mathcal F$-$\eps$ multiplicative).
Note that ${\rm tr}(\Phi_0(a))={\rm tr} (\psi'(a))={\rm tr}(\psi(a))$ for all $a\in A$. 
{{Suppose that \eqref{puniq-3} holds for some finite subset ${\cal H}$ and given $\ep'.$}}
From the definition
of $\Phi_i,$  we know 
\begin{equation}\label{2023-01-20}
\|{\rm tr}(\Phi_i(h))-{\rm tr}(\Phi_0(h))\|=\frac{i}{n+1}\|{\rm tr}(\phi(h))-{\rm tr}(\psi(h))\|<\ep'~~\mbox{for all} ~~h\in {\cal H}
.\end{equation}
It is then straightforward to verify that $L$ also satisfies \eqref{puniq-4}.
{{
In fact, if $\xi_0, \xi_1, \xi: A\to Q$\\
 ($=Q\otimes Q)$ are three linear maps satisfying  $\|{\rm tr}(\xi_i(h))-{\rm tr}(\xi(h))\|<\ep'$ ($i=0,1$) for all $h\in {\cal H}$, then  any convex combination $\xi':=t\xi_0+(1-t)\xi_1$ (where $0\leq t\leq 1$) also satisfies $\|{\rm tr}(\xi'(h))-{\rm tr}(\xi(h))\|<\ep'$ for $h\in {\cal H}$.}}
{{
Note that up to unitary equivalence, $L(t)$ (for $t\in [1/3, 2/3]$), is a convex combination of $s\circ \Phi_i$ and $s \circ \Phi_{i+1}$ (for suitable $i$). Hence for $t\in [1/3,2/3]$, 
$$\|{\rm tr}(L(t)(h))-{\rm tr}(L(0)(h))\|=\|{\rm tr}(L(t)(h))-{\rm tr}(\psi(h))\|
=\|{\rm tr}(L(t)(h))-{\rm tr}(\Phi_0(h))\|<\ep'$$ 
for $h\in {\cal H}$.
The inequality also holds for $t \in [0,1/3]$ (since ${\rm tr}(\psi(a))={\rm tr}(\psi'(a))$ for all $a\in A$) and for $t\in [2/3,1]$ (since ${\rm tr}(\phi(a))={\rm tr}(\phi'(a))$ for all $a\in A$). We obtain  \eqref{puniq-4}, as desired.}}

\end{proof}

\begin{lem}\label{T06}
Let $A$ be a unital simple separable  amenable 
C*-algebra
with ${\rm T}(A)={\rm T}_{\mathrm{qd}}(A)$
which satisfies the UCT. Assume that $A\otimes Q\cong A.$

For any $\sigma>0$, $\ep>0,$ and any finite subset
${\mathcal F}$ of $A$,
there exist a finite set of projections ${\mathcal P}$ in $A$ 
and $\dt>0$ with the following property.

Denote by $G\subseteq \Kzero(A)$ the subgroup generated by $\mathcal P\cup\{1_A\}$.
Let $\kappa: G\to \Kzero(C)$ be a positive homomorphism
with $\kappa([1_A])=[1_C]$, where $C={\rm C}([0,1], Q)$, and let $\lambda: {\rm T}(C)\to {\rm T}(A)$ be a continuous affine map
such that
\begin{equation}\label{07-1-1}
|{{\tau(\kappa([p]))}} - \lambda(\tau)(p)|<\dt,\quad  p\in {\mathcal P},\ \tau\in\mathrm{T}(C).
\end{equation}
(In particular, this entails that $\mathrm{T}(A)\neq\O.$) 
Then
there is a unital  ${\mathcal F}$-$\ep$-multiplicative completely positive map  $L: A\to C$ such that
\begin{equation}\label{07-1}
| \tau\circ L(a)-\lambda(\tau)(a)|<\sigma,\quad a\in\mathcal F,\ \tau\in {\rm T}(C).
\end{equation}
\end{lem}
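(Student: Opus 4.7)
Since $Q$ has a unique tracial state, $\mathrm{T}(C)$ for $C=\mathrm{C}([0,1],Q)$ is canonically affinely homeomorphic to the simplex of Borel probability measures on $[0,1]$; an extreme point $t\in[0,1]$ corresponds to the evaluation trace $\tau_t(f)=\mathrm{tr}(f(t))$. Composing $\lambda$ with $t\mapsto\tau_t$ yields a continuous map $\Lambda:[0,1]\to\mathrm{T}(A)$, and a general $\tau\in\mathrm{T}(C)$ represented by a probability measure $\mu$ on $[0,1]$ satisfies $\lambda(\tau)(a)=\int_0^1\Lambda(t)(a)\,d\mu(t)$. Under the identification $\Kzero(C)=\Q$ sending $[1_C]\mapsto 1$, hypothesis \eqref{07-1-1} reads $|\kappa([p])-\Lambda(t)(p)|<\dt$ for every $t\in[0,1]$ and $p\in\mathcal P$.

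The plan is first to apply Lemma \ref{partuniq} to $(\mathcal F,\ep)$ to extract $\dt_1>0$, a finite set $\mathcal G_1\subset A$ with $\mathcal F\subset\mathcal G_1$, and a finite set of projections $\mathcal P_1\subset A$; then take $\mathcal P\supseteq\mathcal P_1\cup\{[1_A]\}$ and choose $\dt$ very small relative to $\dt_1$ and $\sigma$. Using uniform continuity of $\Lambda$ on $\mathcal F\cup\mathcal G_1$, pick a partition $0=t_0<t_1<\cdots<t_N=1$ so fine that $|\Lambda(s)(a)-\Lambda(t)(a)|<\sigma/8$ for $s,t$ in a common interval and $a\in\mathcal F\cup\mathcal G_1$. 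Because $\Lambda(t_i)\in\mathrm{T}_{\mathrm{qd}}(A)$ by hypothesis, one obtains unital $\mathcal G_1$-$\dt_1$-multiplicative completely positive maps $\phi_i^{(0)}:A\to Q$ with $|\mathrm{tr}\circ\phi_i^{(0)}(a)-\Lambda(t_i)(a)|$ arbitrarily small on $\mathcal G_1$.

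The key technical step is now to perturb each $\phi_i^{(0)}$ to a $\phi_i$ so that $[\phi_i(p)]=\kappa([p])$ in $\Kzero(Q)=\Q$ for every $p\in\mathcal P$, while keeping multiplicativity and the trace estimate. Since $|\mathrm{tr}(\phi_i^{(0)}(p))-\kappa([p])|$ is small (bounded by $\dt+\sigma/8$), the K-theoretic discrepancies $r_{i,p}:=\kappa([p])-[\phi_i^{(0)}(p)]\in\Q$ are small rationals. Applying Lemma \ref{0.5} with these $r_{i,p}$ (and with its $\ep_2$ much smaller than $\sigma$) produces auxiliary completely positive maps $\sigma_i,\mu_i:A\to Q$ with $\sigma_i(1_A)=\mu_i(1_A)$ a projection of trace less than $\ep_2$ and $[\sigma_i(p)]-[\mu_i(p)]=r_{i,p}$. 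Using $A\cong A\otimes Q$ to absorb an extra copy of $Q$, a standard cut-down-and-replace manipulation then splices $\mu_i$ into $\phi_i^{(0)}$ inside a corner of trace at most $\ep_2$ to yield a unital $\mathcal G_1$-$\dt_1$-multiplicative completely positive map $\phi_i:A\to Q$ with $[\phi_i(p)]=\kappa([p])$ and trace within $\sigma/4$ of $\Lambda(t_i)$ on $\mathcal F$.

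With equal K-theoretic classes on $\mathcal P$ for consecutive pairs $\phi_i,\phi_{i+1}$, Lemma \ref{partuniq} supplies unitaries $u_i\in Q$ and $\mathcal F$-$\ep$-multiplicative completely positive maps $L_i:A\to\mathrm{C}([t_i,t_{i+1}],Q)$ with $\pi_{t_i}\circ L_i=\phi_i$ and $\pi_{t_{i+1}}\circ L_i=\mathrm{Ad}\,u_i\circ\phi_{i+1}$; its ``moreover'' clause with $\mathcal H=\mathcal F$ and $\ep'=\sigma/2$ (hypothesis \eqref{puniq-3} being ensured by the partition choice together with the trace approximations of $\phi_i,\phi_{i+1}$) forces $|\mathrm{tr}\circ\pi_t\circ L_i(a)-\mathrm{tr}\circ\phi_i(a)|<\sigma/2$ throughout $[t_i,t_{i+1}]$. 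Absorbing the running unitary conjugation by twisting $\phi_{i+1}$ before each subsequent invocation and concatenating the $L_i$'s produces the desired $L:A\to C$; fibrewise one obtains $|\mathrm{tr}(L(a)(t))-\Lambda(t)(a)|<\sigma$ for $a\in\mathcal F$, and integrating against any probability measure $\mu$ representing $\tau\in\mathrm{T}(C)$ yields \eqref{07-1}. The main obstacle is the K-theoretic correction in the third paragraph: quasidiagonality provides only a trace approximation, whereas Lemma \ref{partuniq} requires exact agreement of $\Kzero(Q)$-classes---this is precisely what Lemma \ref{0.5} (together with $A\otimes Q\cong A$) is designed to deliver.
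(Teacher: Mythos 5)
Your overall architecture is the same as the paper's (fine partition of $[0,1]$, quasidiagonal approximations $\phi_i^{(0)}$ of the fibre traces, a $\Kzero$-correction via Lemma \ref{0.5}, stitching with Lemma \ref{partuniq} and its trace-control clause, then integrating against the measure representing $\tau$), but the pivotal third paragraph contains a genuine gap. You claim that splicing the map $\mu_i$ from Lemma \ref{0.5} into $\phi_i^{(0)}$ yields a unital map $\phi_i$ with $[\phi_i(p)]=\kappa([p])$ \emph{exactly}. Lemma \ref{0.5} does not deliver this: it only controls the \emph{difference} $[\sigma(p)]-[\mu(p)]$ for the pair of maps it constructs, while the individual classes $[\sigma(p)]$, $[\mu(p)]$ are unknown (only their supporting projection is small). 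Consequently, adding $\mu_i$ in a small corner changes $[\phi_i^{(0)}(p)]$ to $[\phi_i^{(0)}(p)]+[\mu_i(p)]$, an uncontrolled value, and the subsequent renormalization making the map unital rescales all classes by a common factor; there is no way, with these tools, to hit the prescribed absolute value $\kappa([p])$. With your choice $r_{i,p}=\kappa([p])-[\phi_i^{(0)}(p)]$ the corrected classes $[\phi_i^{(0)}(p)]+[\mu_i(p)]=2[\phi_i^{(0)}(p)]+[\sigma(p)]-\kappa([p])$ still depend on $i$, so consecutive maps need not agree on $\mathcal P$ and Lemma \ref{partuniq} cannot be invoked.

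The repair is to aim only at what Lemma \ref{partuniq} actually needs, namely a \emph{common} value of the classes: take the discrepancies relative to a fixed reference, $r_{i,p}=[\phi_i^{(0)}(p)]-[\phi_0^{(0)}(p)]$, and correct by forming $\phi_i^{(0)}\oplus\mu_i$, so that $[\phi_i^{(0)}(p)]+[\mu_i(p)]=[\phi_0^{(0)}(p)]+[\sigma(p)]$ is independent of $i$ (the unknown $[\sigma(p)]$ cancels), and the common unitizing rescaling preserves this equality. This is exactly where hypothesis \eqref{07-1-1} enters: since $\tau\mapsto\tau(\kappa([p]))$ is constant on $\mathrm{T}(C)$, the function $\lambda_*(\hat p)$ varies by at most $2\dt$ over the whole of $[0,1]$, so the relative discrepancies $r_{i,p}$ (not merely the adjacent ones) fall below the threshold furnished by Lemma \ref{0.5}. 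Two further bookkeeping points you omit: Lemma \ref{0.5} requires $[1_A],[p_1],\dots,[p_s]$ to be $\Q$-linearly independent, so one must first delete dependent projections from $\mathcal P$; and $\dt$ must be chosen \emph{after} the $\dt$-threshold of Lemma \ref{0.5} is known (e.g.\ $\dt\le\dt_2/8$ as in the paper), not merely ``small relative to $\dt_1$ and $\sigma$.''
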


\begin{proof}
Let $\ep,$ $\sigma$ and ${\mathcal F}$ be given. We may assume that every element of $\mathcal F$ has norm at most one.

Let $\delta_1$ (in place of $\delta$), $\mathcal G$, and $\mathcal P$ be as assured by Lemma \ref{partuniq} for $\mathcal F$ and $\eps$. 
We may assume
that ${\cal F\cup \mathcal P}\subseteq {\cal G}.$


Adjoining $1_A$ to $\mathcal P$, write ${\mathcal P}=\{1_A, p_1, p_2,...,p_s\}.$ Deleting one or more of $p_1, p_2, ..., p_s$ (but not $1_A$), we may assume that the set
$\{[1_A], [p_1],...,[p_s]\}$ is $\Q$-linearly independent. {{(Since $A\cong A\otimes Q$, we have $K_0(A)\cong K_0(A)\otimes \Q$.)}}

Let $\dt_2>0$ (in place of $\dt$) be as assured by Lemma \ref{0.5} for
$\ep_1=\dt_1,$ $\ep_2=\sigma/4,$ ${\mathcal G}$, and $\{p_1,p_2,...,p_s\}.$

Put $\dt=\min\{\dt_1, {\dt_2/8}, 1/4\}$, and let us show that $\mathcal P$ and $\delta$ are as desired.

Let $\kappa$ and $\lambda$ be given satisfying \eqref{07-1-1}. 

Let $\lambda_*: \Aff({\rm T}(A))\to \Aff({\rm T}(C))$ be defined by
$\lambda_*(f)(\tau)=f(\lambda(\tau))$ for all $f\in \Aff({\rm T}(A))$ and $\tau\in {\rm T}(C).$
Identify $\partial_e{({\rm T}(C))}$ with $[0,1]$ {{(that is, identify  $\mathrm{tr}\circ\pi_t$ with $t$, where $\pi_t:  C={\rm C}([0,1], Q)\to Q$ is the point evaluation at $t\in [0,1]$)}}, and put $\eta=\min\{\dt, \sigma/12\}.$
 Choose a partition
$$
0=t_0<t_1<t_2<\cdots <t_{n-1}<t_n=1
$$
of the interval $[0, 1]$ such that 
\begin{equation}\label{T06-2}
|\lambda_*(\hat{g})(t_j) - \lambda_*(\hat{g})(t_{j-1})| < \eta,\quad g\in {\mathcal G},\  j=1,2,...,n.
\end{equation}
{{(Here recall that $\hat{g}\in \Aff({\rm T}(A))$ is given by $\hat{g}(\tau)=\tau (g)$ for any $\tau\in {\rm T}(A)$.)}}

Since ${\rm T}(A)={\rm T}_{{\rm qd}}(A),$  there are unital ${\mathcal G}$-$\dt$-multiplicative completely positive  maps
$\Psi_j:  A\to Q$, $j=0, 1, 2,  ..., n$, such that
\begin{equation}\label{T06-3}
|\mathrm{tr}\circ \Psi_j(g)-\lambda_*(\hat{g})(t_j)|<\eta,\quad g\in {\mathcal G}.
\end{equation}
It then follows from \eqref{pert-proj}, \eqref{T06-3}, and \eqref{07-1-1}  that, for each $i=1,2,...,s$ and each $j=1, 2, ..., n$,
\beq\label{T06-4}
 |{\rm tr}([\Psi_j(p_i)])-{\rm tr}([\Psi_0(p_i)])| &<& 4\delta +2\eta +|\lambda_*(\hat{p_i})(t_j) - \lambda_*(\hat{p_i})(t_{0})| \nonumber\\
&<&4\delta +2\eta+2\dt+|{\rm tr}\circ \pi_{t_j}(\kappa([p_i]))-{\rm tr}\circ\pi_0(\kappa([p_i])| \nonumber \\
&=& 2\eta+6\dt\leq 8\delta\leq \delta_2.
\eneq
(Here, as before, $\pi_t$ is the point evaluation at $t\in [0,1]$.)
We also have, by (\ref{T06-2}) and (\ref{T06-3}), that
\beq\label{T06-4+n}
|\mathrm{tr}(\Psi_j(g))-\mathrm{tr}(\Psi_{j+1}(g))|<3\eta,\quad  g\in {\cal G},\ j=1, 2, ..., n.
\eneq
Consider the differences
\begin{equation}\label{1228-1}
r_{i,j}:={\rm tr}([\Psi_j(p_i)])-{\rm tr}([\Psi_0(p_i)]),\quad i=1,2,...,s,\ j=1,2, ...,n.
\end{equation}
By \eqref{T06-4}, $r_{i, j}<\delta_2$. Applying Lemma \ref{0.5}, we obtain a projection $e\in Q$ with $\mathrm{tr}(e)<\sigma/4$ and ${\mathcal G}$-$\dt_1$-multiplicative  unital completely positive maps
$\psi_0, \psi_j: A\to eQe$, $j=1,2,...,n,$ such that
\beq\label{T06-5-NN}
[\psi_0(p_i)]-[\psi_j(p_i)]=r_{i,j},\quad i=1,2,...,s,\  j=1,2,...,n.
\eneq

Consider the direct sum maps
$$\Phi_j':=\psi_j\oplus \Psi_j: A\to ({{e\oplus 1}})\mathrm{M}_2(Q)({{e\oplus 1}}),\quad j=0, 1,2,...,n.$$
Since $\delta\leq \delta_1$, these are $\mathcal G$-$\delta_1$-multiplicative.  By  (\ref{1228-1}) and (\ref{T06-5-NN}),
\beq\label{T06-7}
[\Phi_j'(p_i)]=[\Phi_0'(p_i)],\quad i=1,2,...,s,\  j=1,2,...,n.
\eneq
Define $s: \Q\to \Q$ by $s(x)=x/(1+\mathrm{tr}(e))$, $x\in \Q.$
Choose a (unital) isomorphism $$S: ({{e\oplus 1}})\mathrm{M}_2(Q)({{e\oplus 1}})\to Q$$ such
that $S_{*0}=s.$

Consider the composed maps, still $\mathcal G$-$\delta_1$-multiplicative, and now unital,
$$\Phi_j:=S\circ \Phi_j': A\to Q, \quad j=0,1, 2, ..., n.$$
By (\ref{T06-7}),
$$[\Phi_j]|_{\mathcal P}=[\Phi_{j-1}]|_{\mathcal P},\quad j=1, 2, ..., n,$$
and by (\ref{T06-4+n}) and the fact that $\mathrm{tr}(e)<\sigma/4,$
\begin{equation}\label{sm-tr-ed-pts}
|\mathrm{tr}\circ\Phi_j(a)-\mathrm{tr}\circ\Phi_{j-1}(a)|<3\eta+\sigma/4\leq \sigma/2,\quad a\in {\mathcal F},\ j=1, 2, ...,n.
\end{equation}

It follows by Lemma \ref{partuniq}, applied successively for $j=1, 2, ..., n$ (to the pairs $(\Phi_0, \Phi_1)$, $(\mathrm{Ad}\, u_1\circ\Phi_1, \mathrm{Ad}\, u_1\circ\Phi_2)$, ..., $(\mathrm{Ad}\, u_{n-1}\circ\cdots\circ\mathrm{Ad}\, u_1\circ\Phi_{n-1}, \mathrm{Ad}\, u_{n-1}\circ\cdots\circ\mathrm{Ad}\, u_1\circ\Phi_{n})$), that there are, for each $j$, a unitary $u_j\in Q$ and a unital
${\mathcal F}$-$\ep$-multiplicative completely positive map $L_j: A\to {\rm C}([t_{j-1}, t_{j}], Q)$ such that
\begin{equation}\label{prop-1}
\pi_0\circ L_1=\Phi_0, \quad \pi_{t_1}\circ L_1={\rm Ad}\, u_1\circ \Phi_1,
\end{equation}
and
\begin{equation}\label{prop-2}
\pi_{t_{j-1}}\circ L_j=\pi_{t_{j-1}}\circ L_{j-1},\quad \pi_{t_{j}}\circ L_j={\rm Ad}\, u_j\circ\cdots\circ\mathrm{Ad}\, u_1\circ \Phi_j, \quad j=2, 3, ..., n.
\end{equation}
Furthermore, {{applying the ``moreover" part of Lemma \ref{partuniq},}} in view of \eqref{sm-tr-ed-pts}, we may choose the maps $L_j$ such that
\begin{equation}\label{prop-tr}
|{\rm tr}\circ \pi_t\circ L_j(a)-\lambda({\rm tr}\circ \pi_t)(a)|<\sigma,\quad  t\in [t_{j-1}, t_{j}],\ a\in\mathcal F,\ j=1, 2, ..., n.
\end{equation}

Define $L: A\to {\rm C}([0,1], Q)$ by
$$\pi_t \circ L= \pi_t \circ L_j,\quad t\in [t_{j-1}, t_{j}],\ j=1,2,...,n.$$
Since $L_j$, $j=1, 2, ..., n$, are $\mathcal F$-$\eps$-multiplicative (use \eqref{prop-1} and \eqref{prop-2}), we have that $L$ is a unital $\mathcal F$-$\eps$-multiplicative completely positive map $A \to \mathrm{C}([0, 1], Q)$. (Note that the construction of the map $L$ is different from---is based on---the construction of the map $L$ in the proof of Lemma \ref{partuniq}.) It follows from \eqref{prop-tr} that $L$ satisfies \eqref{07-1}, as desired.
\end{proof}

\begin{thm}\label{TT}
Let $A$ be a unital simple separable C*-algebra with finite nuclear dimension. Assume that
${\rm T}(A)={\rm T}_{{\rm qd}}(A)\not=\O$ and that $A$ satisfies the UCT.
Then ${\mathrm{gTR}}(A\otimes Q)\le 1,$ and so (if $A$ is not elementary), $A\in {\mathcal N}_1.$
\end{thm}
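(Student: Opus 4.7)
The goal is to show that $\mathrm{gTR}(A\otimes Q)\le 1$; once that is established, $A\in\mathcal N_1$ follows from the definition, because $A$ satisfies the UCT and, having finite nuclear dimension, is $\mathcal Z$-stable (by Winter's theorem, invoked in Definition \ref{Dnotation-1}). After replacing $A$ by $A\otimes Q$ (which still has finite nuclear dimension, still satisfies the UCT, and still has $\mathrm{T}(A)=\mathrm{T}_{\mathrm{qd}}(A)$ by \cite{Bn}), I may and shall assume $A\cong A\otimes Q$. In particular $K_0(A)$ is a $\mathbb Q$-vector space, $A$ is $\mathcal Z$-stable (hence has strict comparison), and every trace is quasidiagonal. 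Given $\eps>0$, a finite $\mathcal F\subset A$ with each element of norm at most one, and $a\in A_+\setminus\{0\}$, I must produce a projection $p\in A$ and a unital sub-C*-algebra $C\in\mathcal C$ of $pAp$ satisfying the three conditions of Definition \ref{DgTR}.

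\textbf{Step 1 (Choice of model data).} Enlarging $\mathcal F$, pick projections $p_1,\ldots,p_s\in A$ such that $\{[1_A],[p_1],\ldots,[p_s]\}$ is $\mathbb Q$-linearly independent in $K_0(A)$ and generates a subgroup large enough for the uniqueness data of Lemma \ref{stable-uniq-Q} and Lemma \ref{partuniq} (applied to a suitable $\mathcal G\supset\mathcal F$ and smaller tolerance $\delta$). Because $A$ has finite nuclear dimension and $\mathrm T(A)=\mathrm T_{\mathrm{qd}}(A)$, I may build, for appropriate integers $l,r$, unital homomorphisms $\psi_0,\psi_1:Q^l\to Q^r$ so that the subgroup $\Delta=\{x:(\psi_0)_{*0}(x)=(\psi_1)_{*0}(x)\}\subset\mathbb Q^l$ is compatible with the K-theory data above and so that $C:=\mathrm C(Q^l,Q^r,\psi_0,\psi_1)\in\mathcal C$ is the Elliott--Thomsen building block that will serve as the local model.

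\textbf{Step 2 (Assembling an approximately multiplicative map $A\to C$).} Apply Lemma \ref{0.7} to obtain a $\mathcal G$-$\delta$-multiplicative $\Sigma:A\to Q^l$ with $\Sigma(1_A)$ a projection of arbitrarily small trace and $[\Sigma(p_i)]\in\Delta$. Apply Lemma \ref{T06} to produce a unital $\mathcal G$-$\delta$-multiplicative map $L':A\to \mathrm C([0,1],Q^r)$ tracking the affine path of traces that interpolates $\mathrm{tr}\circ\psi_0\circ\Sigma$ and $\mathrm{tr}\circ\psi_1\circ\Sigma$, with the K-theory classes $[\pi_0\circ L']=[\psi_0\circ\Sigma]$ and $[\pi_1\circ L']=[\psi_1\circ\Sigma]$. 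Use Lemma \ref{partuniq} (stable uniqueness) at each endpoint to adjust $L'$ by a unitary of $Q^r$, obtaining $L:A\to\mathrm C([0,1],Q^r)$ with $\pi_0\circ L=\psi_0\circ\Sigma$ and $\pi_1\circ L=\psi_1\circ\Sigma$ as c.p.c.\ maps. The pair $(L,\Sigma)$ now assembles into a unital $\mathcal F$-$\eps'$-multiplicative completely positive map $\Psi:A\to C$ for $\eps'$ arbitrarily small.

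\textbf{Step 3 (Recovering a subalgebra of $A$).} Because the Elliott--Thomsen building block $C$ is semiprojective, an approximately multiplicative c.p.c.\ map $\Psi:A\to C$ together with the stable uniqueness of Lemma \ref{stable-uniq-Q} yields, via the standard Lin--Niu perturbation/lifting procedure, an honest unital $*$-homomorphism $\iota:C\to pAp$ for some projection $p\in A$ satisfying $\|xp-px\|<\eps$ and $\mathrm{dist}(pxp,\iota(C))<\eps$ for $x\in\mathcal F$. The trace of $1-p$ inherits the smallness of $\mathrm{tr}(\Sigma(1_A))$, which was chosen arbitrarily small; by the strict comparison holding in $A$, this gives $1-p\lesssim a$. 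Thus $\iota(C)$ is the desired subalgebra.

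The main obstacle is Step 3 — the passage from the approximate map $\Psi:A\to C$ built by the existence lemmas to an honest inclusion $C\hookrightarrow pAp$ with $p$ big and $1-p\lesssim a$. This requires carefully balancing the three error budgets (approximate multiplicativity, K-theory matching via $\Delta$, and tracial matching) and invoking semiprojectivity of $C$ together with the stable uniqueness of Lemma \ref{stable-uniq-Q}; the tracial control supplied by the quasidiagonality hypothesis is essential here, as it is what allows $\mathrm{tr}(1-p)$ to be driven below the threshold forced by strict comparison with $a$.
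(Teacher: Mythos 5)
There is a genuine gap, and it sits exactly where you flag "the main obstacle": Step 3 does not work as stated, because your maps point the wrong way. Everything you construct in Steps 1--2 (via Lemmas \ref{0.7}, \ref{T06}, \ref{partuniq}) is an approximately multiplicative completely positive map \emph{from} $A$ \emph{into} an Elliott--Thomsen model $C$; but the definition of ${\mathrm{gTR}}\le 1$ requires a copy of a $\mathcal C$-algebra sitting \emph{inside} a corner $pAp$, i.e.\ a map in the opposite direction. Semiprojectivity of $C$ cannot reverse this: it lifts (approximate) morphisms \emph{defined on} $C$ with values in a quotient or limit, and here no approximately multiplicative map $C\to A$ has been produced at all (moreover the building blocks actually used are the $Q$-stabilized ones, with $F_1,F_2$ replaced by sums of copies of $Q$, which are not semiprojective in the usual finitely generated sense). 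There is no "standard perturbation/lifting procedure" that converts an approximately multiplicative $\Psi\colon A\to C$ plus stable uniqueness into an embedding $\iota\colon C\to pAp$ with $1-p\lesssim a$; producing such an embedding with the right tracial size is precisely the hard existence-in-$A$ problem that your argument assumes away.

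The paper's proof avoids this by never trying to verify the definition of ${\mathrm{gTR}}\le 1$ directly inside $A$. It first invokes the range theorem (\cite{Ell}; Corollary 13.47 of \cite{GLN}) to obtain a concrete model $C=\lim_n(C_n,\imath_n)$, an inductive limit of $\mathcal C_0\otimes Q$ building blocks, whose invariant $(\Kzero,\Kzero^+,[1],{\rm T},r)$ agrees with that of $A$ (your $\psi_0,\psi_1\colon Q^l\to Q^r$ are the structure maps of the block $C_1$ of this model, not data you get to cook up from $K_0(A)$). It then uses the existence and uniqueness lemmas to build unital completely positive maps $H_n\colon A\to C$ that are asymptotically multiplicative and asymptotically compatible with the trace pairing $\Gamma_{\aff}$, and finally it feeds these into Lemma 3.4 of \cite{Lncross} (which rests on Robert's classification of inductive limits of one-dimensional NCCW complexes \cite{Rb} and on \cite{W2}) to conclude ${\mathrm{gTR}}(A\otimes Q)\le 1$. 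That external lemma is the mechanism that converts "good maps from $A$ into the model" into the tracial-approximation conclusion; in your write-up its role is played by an appeal to semiprojectivity that does not apply, so the proof as proposed does not close.
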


\begin{proof}
Since $A$ is simple, the assumption
${\rm T}(A)={\rm T}_{{\rm qd}}(A)\not=\O$
immediately implies that  $A$ is both stably finite and quasidiagonal.
%
Since $A$ is unital, simple, separable, and has finite nuclear dimension, by \cite{W}, it is $\mathcal Z$-stable. By the definition of $\mathcal N_1$, it remains to show that $\mathrm{gTR}(A\otimes Q) \leq 1$. To prove that $\mathrm{gTR}(A\otimes Q) \leq 1$, we may assume that $A\otimes Q \cong A$.
%
With this assumption, by \cite{R},
$A$ has stable rank one.

By \cite{Ell} (see also Corollary 13.51 of \cite{GLNI}), together with the assumption $A\cong A\otimes Q$, there is a unital simple C*-algebra $C=\lim_{n\to\infty}(C_n, \imath_n),$
where each $C_n$ is the tensor product of a C*-algebra in $\mathcal C_0$ with $Q$ 
and $\imath_n$ is injective, such that
$$
(\Kzero(A), \Kzero(A)_+, [1_A]_0, {\rm T}(A), r_A)\cong (\Kzero(C), \Kzero(C)_+, [1_C]_0, {\rm T}(C), r_C).
$$
Choose
an isomorphism  $\Gamma$ as above, and write
$\Gamma_\aff$ for the corresponding map from $\Aff({\rm T}(A))$ to $\Aff({\rm T}(C)).$

Let a finite subset  ${\mathcal F}$ of $ A$ and $\ep>0$ be given.


Let the finite set $\mathcal P$ of projections in $A$, the finite subset $\mathcal G$ of $A$, and
$\dt>0$ be as assured by Lemma \ref{partuniq} for ${\mathcal F}$ and $\ep$.
 We may assume that $1_A\in {\mathcal P}.$
Write  $\mathcal P=\{1_A, p_1,p_2,...,p_s\}.$  Deleting some elements (but not $1_A$), we may assume that the set
$$\{[1_A], [p_1], [p_2],...,[p_s]\}\subseteq \Kzero(A)$$ is $\Q$-linearly independent. {(Recall $K_0(A)\cong K_0(A)\otimes \Q$ as $A\cong A\otimes Q$.)}

We may also assume, \wilog, that ${\mathcal F}\cup {\mathcal P}\subseteq {\mathcal G}$, $\dt\leq \ep$, and every element of $\mathcal G$ has norm at most one. 

Let $\sigma>0.$
Let $\dt_1>0$ (in place of $\delta$) be as assured by Lemma \ref{0.6} for ${\mathcal G}$, $\dt$ (in place of $\ep_1$), and
$\sigma/64$ (in place of $\ep_2$). We may assume that $\delta_1\leq 8 \delta$.


Let $\dt_3>0$ (in place of $\dt$) be as assured by Lemma \ref{0.7}  for ${\cal G},$ $\dt_1/8$ (in place of $\ep_1$),
and
$\min\{\dt_1/32,  \sigma/256\}$ (in place of $\ep_2$).

Let ${\cal P}_1$ (in place of ${\cal P}$) and
$\dt_2>0$ (in place of $\dt$)  be as assured by Lemma \ref{T06} for
$\dt_1/8$ (in place of $\ep$),
$\min\{\dt_1/32,\sigma/256\}$ (in place of $\sigma$), and ${\cal G}$
(in place of ${\cal F}$).
Replacing $\mathcal P$ and $\mathcal P_1$ by their union, we may assume that ${\cal P}={\cal P}_1.$


By Lemma 2.9 of \cite{EN-K0-Z},
 there is a unital positive linear map
$$
\gamma: \Aff({\rm T}(A))\to \Aff({\rm T}(C_{n_1}))
$$
for some $n_1\ge 1$ such that
\begin{equation}\label{TT-1}
\|(\imath_{n_1, \infty})_\aff\circ\gamma(\hat{f})-\Gamma_\aff(\hat{f})\|<\min\{77\sigma/128, \delta_2, \delta_3/2\},\quad f\in {\mathcal F}\cup {\mathcal P}.
\end{equation}


 We may assume, \wilog,
that there are projections  $p_1',p_2',...,p_s'\in C_{n_1}$ such that
$\Gamma([p_i])=\imath_{n_1,\infty}([p_i']),$ $i=1,2,...,s.$
To simplify notation, assume that
$n_1=1.$
Let $G_0$ denote the subgroup of $\Kzero(A)$ generated by ${\mathcal P}.$
{{Since $G_0$ is  generated freely  by $[1_A], p_i,i=1,2,\cdots,s,$  we can define $\Gamma': G_0\to \Kzero(C_1)$ by
\begin{equation}\label{lft-proj}
\Gamma'([1_A])=[1_{C_1}], ~~\Gamma'([p_i])=[p_i'],\quad i=1,2,...,s.
\end{equation} Hence
\begin{equation*}
(\imath_{1, \infty})_{*0}\circ \Gamma'=\Gamma|_{G_0}.
\end{equation*} }}
%
%
Since the pair $(\Gamma_\aff, \Gamma|_{\Kzero(A)})$ is compatible, as a consequence of \eqref{TT-1} and \eqref{lft-proj} we have
\begin{equation}\label{TT-3}
\|\widehat{p_i'}-\gamma(\widehat{p_i})\|_{\infty}<\min\{\delta_2, \delta_3/2\},
\quad i=1,2,....,s.
\end{equation}
Write
$$C_1=(\psi_0, \psi_1, Q^r, Q^l)=\{(f,a)\in {\rm C}([0,1], Q^r)\oplus Q^l: f(0)=\psi_0(a)\andeqn f(1)=\psi_1(a)\},$$
where $\psi_0, \psi_1: Q^l\to Q^r$ are unital homomorphisms.

%
Denote by $$\pi_{\mathrm{e}}: C_1 \to Q^l,\ (f, a)\mapsto a$$
the canonical quotient map, and by  $j: C_1\to {\rm C}([0,1], Q^r)$ the canonical map
$$j((f,a))=f, \quad (f,a)\in C_1\otimes Q.$$
Denote by $\gamma^*: {\rm T}(C_1)\to {\rm T}(A)$ the continuous affine map
dual to $\gamma.$
Denote by  $\theta_1,\theta_2,...,\theta_l$  the extreme tracial states of $C_1$ factoring through $\pi_{\mathrm{e}}: C_1 \to Q^l$.

By the assumption ${\rm T}(A)={\rm T}_{\mathrm {qd}}(A),$ there exists a unital
${\mathcal G}$-$\min\{\dt_1/8, \dt_3/8\}$-multiplicative completely positive map $\Phi: A\to Q^l$ such that
\begin{equation}\label{TT-4}
|\mathrm{tr}_j\circ \Phi(a)-\gamma^*(\theta_j)(a)|< \min\{13\delta_1/32, \delta_3/4, \sigma/32\},\quad a\in {\mathcal G},\ j=1,2,..., l,
\end{equation}
where $\mathrm{tr}_j$ is the tracial state supported on the $j$th direct summand of $Q^l$.
Moreover, since ${\cal P}\subseteq {\cal G},$ as in (\ref{pert-proj}), we also have that
\begin{equation}\label{pert-p}
|\mathrm{tr}_j([\Phi(p_i)]) - \mathrm{tr}_j(\Phi(p_i))| < \delta_3/4,\quad i=1, 2, ..., s,\ j=1, 2, ..., l.
\end{equation}

Set
$$\D:=(\pi_{\mathrm{e}})_{*0}(\Kzero(C_1))=\ker((\psi_0)_{*0}-(\psi_1)_{*0})\subseteq \Q^{l}.$$
It follows from 
\eqref{TT-4} that 
\begin{equation}\label{trace-infty}
|\tau(\Phi(a)) -  (\pi_{\mathrm{e}})_\aff(\gamma (\hat{a}))(\tau)| < \min\{13\delta_1/32, \delta_3/4, \sigma/32\},\quad a\in \mathcal G,\ \tau\in \mathrm{T}(Q^l),
\end{equation}
where $(\pi_{\mathrm{e}})_{\aff}: \aff(\mathrm{T}(C_1)) \to \aff(\mathrm{T}(Q^l))$ is the map induced by $\pi_{\mathrm{e}}$.
By \eqref{trace-infty} for $a\in\mathcal P\subseteq\mathcal G$, together with \eqref{pert-p} and \eqref{TT-3},
\begin{equation*}
|\tau([\Phi(p_i)]) - \tau\circ (\pi_{\mathrm{e}})_{*0}\circ \Gamma'([p_i])|<\delta_3,\quad\tau\in {\rm T}(Q^l),\ i=1, 2, ...,s.
\end{equation*}

Therefore, applying Lemma \ref{0.7}, with $r_i=[\Phi(p_i)]-(\pi_{\mathrm{e}})_{*0}\circ \Gamma'([p_i])$, we obtain ${\mathcal G}$-$\dt_1/8$-multiplicative completely positive maps
$\Sigma_1, \mu_1 : A\to Q^l$, with ${{\Sigma_1}}(1_A)=\mu_1(1_A)$ a projection, such that
\begin{equation}\label{pert-1-1}
 \tau(\Sigma_1(1_A))<\min\{\dt_1/32,\sigma/256\},\quad \tau\in {\rm T}(Q^l),
\end{equation}
\begin{equation}\label{pert-1-2}
[\Sigma_1({\mathcal P})]\subseteq \D,\quad\mathrm{and}
\end{equation}
\begin{equation} \label{pert-1-3}
{[}\Sigma_1(p_i){]}-[\mu_1(p_i)]=[\Phi(p_i)]-(\pi_{\mathrm{e}})_{*0}\circ \Gamma'([p_i]),\quad  i=1,2,...,s.
\end{equation}

Consider the (unital) direct sum map
\begin{equation}\label{defn-Phi-P}
\Phi':=\Phi\oplus\mu_1: A\to (1\oplus\Sigma_1(1_A))\mathrm{M}_2(Q^l)(1\oplus\Sigma_1(1_A)).
\end{equation}
Note that $\Phi'$, like $\mu_1$ and $\Phi$, is $\mathcal G$-$\delta_1/8$-multiplicative. 
It follows from \eqref{pert-1-3} that for each $i=1, 2, ..., s,$
\begin{equation}\label{TT-7-1}
{{(\psi_0)_{*0}([\Phi'(p_i)])}} 
=(\psi_0)_{*0}([\mu_1(p_i)]+[\Phi(p_i)])=(\psi_0)_{*0}({[}\Sigma_1(p_i){]}+(\pi_{\mathrm{e}})_{*0}\circ \Gamma'([p_i]))
\end{equation}
and
\begin{equation}\label{TT-7-2}
{{(\psi_1)_{*0}([\Phi'(p_i)])}}= (\psi_1)_{*0}([\mu_1(p_i)]+[\Phi(p_i)])=(\psi_1)_{*0}({[}\Sigma_1(p_i){]}+(\pi_{\mathrm{e}})_{*0}\circ \Gamma'([p_i])).
\end{equation}
It follows from \eqref{TT-7-1} and \eqref{TT-7-2}, in view of \eqref{pert-1-2} and the fact (use \eqref{lft-proj}) that $(\pi_{\mathrm{e}})_{*0}\circ \Gamma'([p_i]))\in (\pi_{\mathrm{e}})_{*0}(\Kzero(C_1))=\D,$
that $[\Phi'(p_i)]\in\mathbb D$, $i=1, 2, ..., s$, i.e.,
\beq\label{TT-8-nouse}
{{(\psi_0)_{*0}([\Phi'(p_i)])={{(\psi_1)_{*0}([\Phi'(p_i)])}}}}\quad i=1,2,...,s.
\eneq

Set $B={\rm C}([0,1], Q^r),$ and (as before) write  $\pi_t: B\to  Q^r$ for  the point evaluation at $t\in [0,1].$
Since $1_A\in\mathcal P$, by \eqref{pert-1-2}, $[\Sigma_1(1_A)]\in\mathbb D$, and so there is
a projection $e_0\in B$ such that $\pi_0(e_0)=\psi_0(\Sigma_1(1_A))$ and $\pi_1(e_0)=\psi_1(\Sigma_1(1_A)).$
It then follows from \eqref{pert-1-1} (applied just for $\tau$ factoring through $\psi_0$---alternatively, for $\tau$ factoring through $\psi_1$) that
\begin{equation}\label{small-trace-n-2}
\tau(e_0) < \min\{\dt_1/32,\sigma/256\},\quad \tau\in\mathrm{T}(B).
\end{equation}

Let $j^*: {\rm T}(B)\to {\rm T}(C_1)$ denote the continuous affine map dual to the canonical unital map $j: C_1\to B.$
Let $\gamma_1: \mathrm{T}(B)\to\mathrm{T}(A)$ be defined by $\gamma_1:=\gamma^*\circ j^*$, and let $\kappa: G_0\to \Kzero(B)$
be defined by $\kappa:=j_{*0}\circ \Gamma'.$
Then, by (\ref{lft-proj}) and (\ref{TT-3}),
\beq\label{TT-8+}
|\tau(\kappa([p_i]) - \gamma_1(\tau)(p_i)| & = & | \tau(j_{*0}(\Gamma'([p_i])) - (\gamma^*\circ j^*)(\tau)(p_i)| \nonumber \\
 & = & |j^*(\tau)([p_i']) - \gamma(\widehat{p_i})(j^*(\tau)) |<\dt_2
\eneq
for all $\tau\in {\rm T}(B),$ {{$i=1,2,...,s.$}}


The estimate \eqref{TT-8+} ensures that we can apply Lemma \ref{T06} with $\kappa$ and $\gamma_1$ (note that $\Gamma'([1_A])=[1_{C_1}]$ and hence $\kappa([1_A])=[1_{B}]$), to obtain a unital ${\mathcal G}$-$\dt_1/8$-multiplicative completely positive map
$\Psi': A\to B$ such that
\begin{equation}\label{TT-9-nouse-1}
|\tau\circ \Psi'(a)-\gamma_1(\tau)(a)|<\min\{\dt_1/32, \sigma/256\},\quad a\in {\mathcal G},\ \tau\in {\rm T}(B).
\end{equation}
Amplifying $\Psi'$ slightly (by first identifying $Q^r$ with $Q^r\otimes Q$ and then considering
$H_0(f)(t)=f(t)\otimes(1+e_0(t))$ for $t\in[0, 1]$), we obtain a unital $\mathcal G$-$\delta_1/8$-multiplicative  completely positive map $\Psi: A \to (1\oplus e_0)\mathrm{M}_2(B)(1\oplus e_0)$
such that (by \eqref{TT-9-nouse-1} and \eqref{small-trace-n-2})
\begin{equation}\label{TT-9-use-n-1}
|\tau\circ \Psi(a)-\gamma_1(\tau)(a)| < 2\min\{\dt_1/32,\sigma/256\} =\min\{\dt_1/16, \sigma/128\}.
\end{equation}


Note that for any element $a \in C_1$,
\begin{equation}\label{const-trace}
\tau(\psi_0(\pi_{\mathrm{e}}(a)))=\tau(\pi_0(j(a)))\quad\mathrm{and}\quad \tau(\psi_1(\pi_{\mathrm{e}}(a))) = \tau(\pi_1(j(a))),\quad \tau\in\mathrm{T}(Q^r).
\end{equation}
(Recall that $j: C_1 \to B$ is the canonical map.)
Therefore (by \eqref{const-trace}), for any $a\in\mathcal G$ and $\tau\in\mathrm{T}(Q^r)$,
\begin{eqnarray}\label{trace-infty-1}
| \tau(\psi_0(\Phi(a))) - \gamma(\hat{a})(\tau\circ\pi_0\circ j) |&=&| \tau(\psi_0(\Phi(a))) -  \gamma(\hat{a})(\tau\circ\psi_0\circ\pi_{\mathrm{e}}) | \nonumber \\
&=& | \tau\circ\psi_0(\Phi(a))) - (\pi_{\mathrm{e}})_\aff(\gamma(\hat{a}))(\tau\circ\psi_0)| \nonumber \\
& < & \min\{13\delta_1/32, \sigma/32\} \quad\quad \textrm{(by \eqref{trace-infty})}.
\end{eqnarray}
The same argument shows that
\begin{equation}\label{trace-infty-1-1}
| \tau(\psi_1(\Phi(a))) - \gamma(\hat{a})(\tau\circ\pi_1\circ j) |<\min\{13\delta_1/32, \sigma/32\},\quad a\in\mathcal G,\ \tau\in\mathrm{T}(Q^r).
\end{equation}


Then, for any $\tau\in\mathrm{T}(Q^r)$ and any $a\in\mathcal G$, we have
\begin{eqnarray}\label{trace-mt-boundary}
&&\hspace{-0.2in} |\tau\circ \psi_0\circ \Phi'(a)-\tau\circ \pi_0\circ \Psi(a)| \nonumber \\
&& = | \tau\circ\psi_0(\Phi(a)\oplus\mu_1(a)) -  \tau\circ \pi_0\circ \Psi(a) | \nonumber \\
&&<
|\tau\circ \psi_0(\Phi(a)\oplus\mu_1(a))-\gamma_1(\tau\circ\pi_0)(a)| +\min\{\dt_1/16,\sigma/128\}
 \quad\textrm{(by \eqref{TT-9-use-n-1})}\nonumber \\
&& <  |\tau\circ \psi_0(\Phi(a))-\gamma_1(\tau\circ\pi_0)(a)| + \min\{3\dt_1/32, 3\sigma/256\}
\,\,\,\quad\quad\quad\textrm{(by  {{{\eqref{pert-1-1}}}})}\nonumber \\ \nonumber
&&= |\tau\circ \psi_0(\Phi(a))-  \gamma(\hat{a})(\tau\circ\pi_0\circ j) | + \min\{3\dt_1/32, 3\sigma/256\}  \\\label{trace-mt-boundary-n-1}
&&<  \min\{13\delta_1/32, \sigma/32\} + \min\{3\dt_1/32, 3\sigma/256\} \\
&& \leq 13\delta_1/32 + 3\delta_1/32= \delta_1/2 \quad\quad\quad\quad\quad\quad\quad\quad\quad\quad\quad\quad\quad\quad\quad\quad \textrm{(by \eqref{trace-infty-1})} \nonumber.
\end{eqnarray}

The same argument, using \eqref{trace-infty-1-1} instead of \eqref{trace-infty-1}, shows that
\begin{eqnarray}\label{trace-mt-boundary-n-1-1}
 |\tau\circ \psi_1\circ \Phi'(a)-\tau\circ \pi_1\circ \Psi(a)| & < & \min\{13\delta_1/32, \sigma/32\} + \min\{3\dt_1/32, 3\sigma/256\} \\
 & \leq & \delta_1/2,\quad \tau\in\mathrm{T}(Q^r),\ a\in\mathcal G. \nonumber
\end{eqnarray}
(\eqref{trace-mt-boundary-n-1} and \eqref{trace-mt-boundary-n-1-1}---the $\sigma$ estimates---will be used later to verify \eqref{TT-13+1} and (\ref{TT-13+2}).)

Noting that $\Psi$ and $\Phi'$ are $\delta_1/8$-multiplicative on $\{1_A, p_1, p_2, ..., p_s\}$, by our convention (see \eqref{pert-proj}),  we have, for all $\tau\in {\rm T}(Q^r),$
$$ |\tau([\Psi(p_i)]) - \tau(\Psi(p_i)) |<\delta_1/4\quad\mathrm{and}\quad |\tau([\Phi'(p_i)]) - \tau(\Phi'(p_i)) |<\delta_1/4,\quad i=1, 2, ..., s.$$
Combining these inequalities with  \eqref{trace-mt-boundary-n-1} and \eqref{trace-mt-boundary-n-1-1},
we have
\beq\label{TT-11}
|\tau([\pi_0\circ \Psi(p_i)]) - \tau([\psi_0\circ \Phi'(p_i)])|<\dt_1,\quad i=1,2,...,s,\ \tau\in\mathrm{T}(Q^r).
\eneq
Therefore (in view of \eqref{TT-11}), applying Lemma \ref{0.6} with $r_i=[\pi_0\circ \Psi(p_i)] - [\psi_0\circ \Phi'(p_i)]$, we obtain
${\mathcal G}$-$\dt$-multiplicative completely positive maps
$\Sigma_2: A\to Q^l$ and $\mu_2: A\to Q^r$, taking $1_A$ into projections,
such that
\begin{equation}\label{prop-2-n-1}
[\psi_k\circ \Sigma_2(1_A)]=[\mu_2(1_A)],\quad k=0, 1,
\end{equation}
\begin{equation}\label{prop-2-n-2}
{[} \Sigma_2(\mathcal P){]}\subseteq  (\pi_{\mathrm{e}})_{*0}(\Kzero(C_1))=\D,\quad i=1, 2, ..., s,
\end{equation}
\begin{equation}\label{prop-2-3}
 \tau(\Sigma_2(1_A))< \sigma/64,\quad \tau \in {\rm T}(Q^l), 
\end{equation}
and, taking into account \eqref{prop-2-n-2},
\begin{equation}\label{prop-2-4}
{[}\psi_0\circ \Sigma_2(p_i){]}-[\mu_2(p_i)]  =  {[}\psi_1\circ \Sigma_2(p_i){]}-[\mu_2(p_i)]
 =  [\pi_0\circ \Psi(p_i)]-[\psi_0\circ \Phi'(p_i)],
\end{equation}
where $i=1, 2, ..., s.$  {{By (\ref{prop-2-n-1}), there are unitaries $w_k, k=0,1$ such that
$$\psi_k\circ \Sigma_2(1_A)={\rm Ad} w_k\circ \mu_2(1_A),~k=0,1.$$
Let $\{w(t)\}_{0\leq t\leq 1}$
be a continuous path of {{unitaries}} in {{$Q^r$}}
such that $w(0)=w_0$  and $w(1)=w_1$.}}

Consider the four $\mathcal G$-$\delta$-multiplicative direct sum maps (note that $\Phi'$ and $\Psi$ are $\mathcal G$-$\delta_1/8$-multiplicative, and $\delta_1\leq 8\delta$), from $A$ to $\mathrm{M}_3(Q^r)$,
\beq\nonumber
&&\Phi_0:=(\psi_0\circ \Phi')\oplus (\psi_0\circ \Sigma_2),\quad \Phi_1:=(\psi_1\circ \Phi')\oplus (\psi_1\circ \Sigma_2)\andeqn\\
&&\Psi_0:=(\pi_0\circ \Psi) \oplus {{{\rm Ad}w_0\circ \mu_2}},\quad\quad \quad\,\,\,\,\,\,\Psi_1:=(\pi_1\circ \Psi) \oplus {{{\rm Ad}w_1\circ \mu_2}}.
\eneq

We then have that for each $i=1, 2, ..., s$,
\begin{eqnarray*}
[\Psi_0(p_i)]-[\Phi_0(p_i)] & = & ([(\pi_0\circ \Psi)(p_i)] + [\mu_2(p_i)]) - ([(\psi_0\circ \Phi')(p_i)] + [(\psi_0\circ \Sigma_2)(p_i)])  \\
& = & ([(\pi_0\circ \Psi)(p_i)] - [(\psi_0\circ \Phi')(p_i)] ) - ([(\psi_0\circ \Sigma_2)(p_i)] - [\mu_2(p_i)] )\\
& =& 0 \quad\quad \textrm{(by \eqref{prop-2-4})},
\end{eqnarray*}
and
\begin{eqnarray*}
&& [\Psi_1(p_i)]-[\Phi_1(p_i)] \\
& = & ([(\pi_1\circ \Psi)(p_i)] + [\mu_2(p_i)]) - ([(\psi_1\circ \Phi')(p_i)] + [(\psi_1\circ \Sigma_2)(p_i)]) \\
& = & ([(\pi_1\circ \Psi)(p_i)] - [(\psi_1\circ \Phi')(p_i)] ) - ([(\psi_1\circ \Sigma_2)(p_i)] - [\mu_2(p_i)] ) \\
& = & ([(\pi_0\circ \Psi)(p_i)] - [(\psi_1\circ \Phi')(p_i)] ) - ([(\psi_1\circ \Sigma_2)(p_i)] - [\mu_2(p_i)] )\quad \textrm{($\pi_0$ and $\pi_1$ are homotopic)} \\
&=& ([(\pi_0\circ \Psi)(p_i)] - [(\psi_0\circ \Phi')(p_i)] ) - ([(\psi_1\circ \Sigma_2)(p_i)] - [\mu_2(p_i)] )=0
\quad \textrm{(by \eqref{TT-8-nouse})} \\
& = & 0 \quad\quad   \textrm{(by \eqref{prop-2-4})}.
\end{eqnarray*}

Summarizing the calculations in the preceding paragraph, we have
\begin{equation}\label{TT-13}
[\Phi_i]|_{\mathcal P}=[\Psi_i]|_{\mathcal P},\quad i=0,1.
\end{equation}

On the other hand, for any $a\in\mathcal F\subseteq\mathcal G$ and any $\tau\in\mathrm{T}(Q^r)$, we have
\beq\nonumber
|\tau(\Phi_0(a)) - \tau(\Psi_0(a))| & = & |\tau ((\psi_0\circ \Phi')(a)\oplus (\psi_0\circ \Sigma_2)(a)) - \tau((\pi_0\circ \Psi)(a) \oplus \mu_2(a))| \\\nonumber
 & < & |\tau ((\psi_0\circ \Phi')(a)) - \tau((\pi_0\circ \Psi)(a) )| + \sigma/32\quad\quad\textrm{(by \eqref{prop-2-3})} \\\nonumber
 & < & \min\{13\delta_1/16, \sigma/32\} + \min\{3\dt_1/16, 3\sigma/256\} +\sigma/32  \quad\quad\textrm{(by \eqref{trace-mt-boundary-n-1})}\\\label{TT-13+1}
 &\leq & 5\sigma/64.
\eneq
The same argument, using \eqref{trace-mt-boundary-n-1-1} instead of \eqref{trace-mt-boundary-n-1},  also shows that
\beq\label{TT-13+2}
|\tau(\Phi_1(a)) - \tau(\Psi_1(a))| < 5\sigma/64, \quad a\in\mathcal F,\ \tau\in\mathrm{T}(Q^r).
\eneq

Since $1_A \in\mathcal P$, by \eqref{prop-2-n-2}, $[\Sigma_2(1_A)]\in\mathbb D$, and so there is  a projection $e_1\in B$ such that $\pi_0(e_1)=\psi_0(\Sigma_2(1_A))$
and $\pi_1(e_1)=\psi_1(\Sigma_2(1_A)).$ {{From the  construction, $$\Psi_i(1_A)=\Phi_i(1_A)=1\oplus\pi_i(e_0)\oplus \pi_i(e_1),\quad i=0, 1.$$}}
It then follows from \eqref{prop-2-3} (applied just for $\tau$ factoring through $\psi_0$---alternatively, for $\tau$ factoring through $\psi_1$) that
\begin{equation}\label{small-2-n-proj}
\tau(e_1) < \sigma/64,\quad \tau\in\mathrm{T}(B).
\end{equation}
Set $E_0'=1\oplus \pi_0(e_0)\oplus \pi_0(e_1),$ $E_1'=1\oplus \pi_1(e_0)\oplus \pi_1(e_1),$
and  $D_0=E_0' {{\mathrm{M}_3}}(Q^r)E_0',$  $D_1=E_1'{{\mathrm{M}_3}}(Q^r)E_1'.$


Pick a sufficiently small $r'\in (0, 1/4)$ that 
\begin{equation}\label{small-pert-hor}
\| \Psi (a)((1+2r')t-r') - \Psi(a)(t) \| <\sigma/64,\quad a\in\mathcal G,\ t\in [\frac{r'}{1+2r'}, \frac{1+r'}{1+2r'}].
\end{equation}


It follows from Lemma \ref{partuniq}
(in view of (\ref{TT-13}), (\ref{TT-13+1}) and (\ref{TT-13+2})) that there exist unitaries $u_0\in D_0$ and
 $u_1\in D_1,$
 and unital ${\mathcal F}$-$\ep$-multiplicative completely positive maps $L_0:A\to {\rm C}([-r', 0], D_0)$ and
$L_1: A\to  {\rm C}([1, 1+r'], D_1)$, such that
\beq
&&\pi_{-r'}\circ L_0=\Phi_0,\,\,\, \pi_0\circ L_0={\rm Ad}\, u_0\circ {{\Psi_0}}, \label{small-var-trace--2} \\
&&\pi_{1+r'}\circ L_1=\Phi_1,\,\,\, \pi_{1}\circ L_1={\rm Ad}\, u_1\circ {{\Psi_1}}, \label{small-var-trace--1} \\
&&|\tau\circ \pi_t\circ L_0(a)-\tau\circ \pi_0\circ L_0(a)|<5\sigma/32,\quad t\in [-r', 0], \label{small-var-trace-0} \\
&&|\tau\circ \pi_t\circ L_1(a)-\tau\circ \pi_1\circ L_1(a)|<5\sigma/32,\quad t\in [1,1+r'], \label{small-var-trace-1}
\eneq
where $a\in {\mathcal F},$ $\tau\in {\rm T}(Q^r),$ and (as before) $\pi_t$ is the point evaluation at $t\in [-r', 1+r'].$ 

Write $E_3=1\oplus e_0\oplus e_1\in \mathrm{M}_3(\mathrm{C}([0,1], Q^r))$ and $B_1=E_3(\mathrm{M}_3(\mathrm{C}([0,1], Q^r)))E_3.$
There exists a  unitary $u\in B_1$ such
that $u(0)=u_0$ and $u(1)=u_1.$
Consider the projection $E_4\in \mathrm{M}_3(\mathrm{C}([-r',1+r'], Q^r))$ defined by    
$E_4|_{[-r,0]}=E_0',$ $E_4|_{[0,1]}=E_3$ and $E_4|_{[1,1+r]}=E_1'.$
Set
$$B_2=E_4(\mathrm{M}_3(\mathrm{C}([-r',1+r'], Q^r)))E_4.$$
Define a unital $\mathcal F$-$\eps$-multiplicative (note that $\mathcal F\subseteq \mathcal G$ and $\delta\leq \eps$) completely positive map $L': A\to B_2$ by
\begin{equation}\label{defn-new-L}
L'(a)(t) = \left\{\begin{array}{ll}
L_0(a)(t), & t\in [-r', 0), \\
{\rm Ad}\, u(t)\circ (\pi_t\circ\Psi\oplus {{{\rm Ad}w(t)\circ \mu_2}})(a), & t\in [0, 1] ,\\
L_1(a)(t) & t \in (1, 1+r'].
\end{array}\right.
\end{equation}

Note that for any $a\in\mathcal G$, and any $\tau\in\mathrm{T}(Q^r)$, by \eqref{defn-new-L}, if $t\in[0, 1]$, then
\begin{eqnarray}\label{pre-realize-trace-1}
&&\hspace{-0.4in}|\tau(\pi_t(L'(a))) - \gamma_1(\pi_t^*(\tau))(a) | \nonumber \\
 & = & |\tau({\rm Ad}\, u(t)\circ ({ {\pi_t\circ}}\Psi\oplus {{{\rm Ad}w(t)\circ \mu_2}})(a)) - \gamma_1(\pi_t^*(\tau))(a)| \nonumber \\
& = & |\tau(\pi_t(\Psi(a))) + \tau(\mu_2(a)) - \gamma_1(\pi_t^*(\tau))(a)| \nonumber \\
& < & |(\pi_t^*(\tau))(\Psi(a))) - \gamma_1(\pi_t^*(\tau))(a)| + \sigma/64 \,\,\,\quad\quad\textrm{(by \eqref{prop-2-n-1} and \eqref{prop-2-3})} \nonumber \\
& < & \min\{\delta_1/16, \sigma/128\} + \sigma/64 \leq 3\sigma/128 \,\,\,\quad\quad\quad\textrm{(by \eqref{TT-9-use-n-1})},
\end{eqnarray}
where $\pi_t^*: \mathrm{T}(Q^r) \to \mathrm{T}(B)$ is the dual of $\pi_t: B\to Q^r$.
Furthermore, if $t \in [-r', 0]$, then for any $a\in\mathcal F$, and any $\tau\in\mathrm{T}(Q^r)$,
\begin{eqnarray}\label{pre-realize-trace-2}
&&\hspace{-0.4in} |\tau(\pi_t(L'(a))) - \gamma_1(\pi_0^*(\tau))(a) | \nonumber \\
&=& |\tau(L_0(a)(t)) - \gamma_1(\pi_0^*(\tau))(a)| \nonumber \\
& < & |\tau(L_0(a)(0)) - \gamma_1(\pi_0^*(\tau))(a)| + 5\sigma/32\,\quad\quad\quad\quad\quad\quad\textrm{(by \eqref{small-var-trace-0})} \nonumber \\
& = & |\tau(\Psi_0(a))- \gamma_1(\pi_0^*(\tau))(a)| + 5\sigma/32 \,\,\quad\quad \quad\quad\quad \quad\quad\textrm{(by \eqref{small-var-trace--2})} \nonumber \\
& = & |\tau((\pi_0\circ \Psi)(a) \oplus {{{\rm Ad}w_0\circ \mu_2}}(a))- \gamma_1(\pi_0^*(\tau))(a)| + 5\sigma/32 \nonumber  \\
& < & |\tau((\pi_0\circ \Psi)(a)- \gamma_1(\pi_0^*(\tau))(a)| + \sigma/64 + 5\sigma/32 \quad\quad\textrm{(by \eqref{prop-2-n-1} and \eqref{prop-2-3})} \nonumber \\
& < & \min\{\delta_1/16, \sigma/128\} + \sigma/64 + 5\sigma/32 <23\sigma/128 \quad\quad\textrm{(by \eqref{TT-9-use-n-1})}.
\end{eqnarray}
Again, if $t \in [1, 1+r']$, then the same argument shows that for any $a\in\mathcal F$, and any $\tau\in\mathrm{T}(Q^r)$,
\begin{equation}\label{pre-realize-trace-3}
|\tau(\pi_t(L'(a))) - \gamma_1(\pi_1^*(\tau))(a) | < 23\sigma/128.
\end{equation}


Let us modify $L'$ to a unital map from $A$ to $B$.
First, let us renormalize $L'$. Consider the isomorphism $\eta: \Q^r\to \Q^r$ defined by
$$
\eta(x_1,x_2,...,x_r)=({1\over{\mathrm{tr}_1(E_3)}}x_1, {1\over{\mathrm{tr}_2(E_3)}}x_2,...,{1\over{\mathrm{tr}_r(E_3)}}x_r),
$$
for all $(x_1,x_2,...,x_r)\in \Q^r,$
where (as before) $\mathrm{tr}_k$ is the tracial state supported on
the $k$th direct summand of $Q^r$.  
Then there is a (unital) isomorphism $\phi: B_2\to {\rm C}([-r', 1+r'], Q^r)$ such that $\phi_{*0}=\eta.$
Let us replace the map $L'$ by the map $\phi\circ L'$, and still denote it by $L'$. Note that it follows from \eqref{pre-realize-trace-1}, \eqref{small-trace-n-2}, and \eqref{small-2-n-proj} that for any $t\in [0, 1]$, any $a\in\mathcal F$, and any $\tau\in\mathrm{T}(Q^r)$,
\begin{eqnarray}\label{pre-realize-trace-2-1}
& & |\tau(\pi_t(L'(a))) - \gamma_1(\pi_t^*(\tau))(a) |  \nonumber \\
& < &  
{{3\sigma/128}}+ \tau(e_0) + \tau(e_1) \nonumber \\
& < &  3\sigma/128 + \min\{\delta_1/16, \sigma/64\}+ \sigma/64 \leq 7\sigma/128.
\end{eqnarray}
The same argument, using \eqref{pre-realize-trace-2} and \eqref{pre-realize-trace-3} instead of \eqref{pre-realize-trace-1}, shows that for any $a\in\mathcal F$,
\begin{eqnarray}
|\tau(\pi_t(L'(a))) - \gamma_1(\pi_0^*(\tau))(a) | & <  & 27\sigma/128,\quad t\in[-r', 0] \label{pre-realize-trace-2-2}, \\
|\tau(\pi_t(L'(a))) - \gamma_1(\pi_1^*(\tau))(a) | & < & 27\sigma/128, \quad t\in[1, 1+r'] \label{pre-realize-trace-2-3}.
\end{eqnarray}

Now, put
\begin{equation}\label{defn-new-new-L}
L''(a)(t) = L'(a)((1+2r')t-r'),\quad t\in[0, 1].
\end{equation}
This perturbation will not change the trace very much,
as for any $a\in\mathcal F$ and any $\tau\in \mathrm{T}(Q^r)$,  if $t\in[0, {r'}/(1+2r')]$, then
\begin{eqnarray*}
&  &\hspace{-0.4in} |\tau(L''(a)(t)) - \tau(L'(a)(t))| \\
&=& |\tau(L'(a)((1+2r')t-r')) - \tau(L'(a)(t))|  \,\,\,\quad\quad\quad\quad\quad\textrm{(by \eqref{defn-new-new-L})} \\
& = & |\tau(L_0(a)((1+2r')t-r')) - \tau((\Psi(a)(t)) +{{{\rm Ad}w(t)\circ \mu_2}}(a))|  \\
& < & |\tau(L_0(a)((1+2r')t-r')) - \tau(\Psi(a)(t)) | + \sigma/64 \quad\quad\textrm{(by \eqref{prop-2-n-1}) and \eqref{prop-2-3}} \\
& < & |\tau(L_0(a)(0)) - \tau(\Psi(a)(t)) | + 5\sigma/32 + \sigma/64 \,\,\,\,\,\quad\quad\quad\textrm{(by \eqref{small-var-trace-0})} \\
& = &  |{{\tau(\Psi_0(a))}}) - \tau(\Psi(a)(t)) | + 11\sigma/64  \,\,\,\quad\quad \quad\quad\quad\quad\textrm{(by \eqref{small-var-trace--2})} \\
& < & \sigma/64 + 11\sigma/64=3\sigma/16   \,\, \,\,\,\quad\quad\quad\quad\quad\quad\quad\quad\quad\quad\quad\textrm{(by \eqref{prop-2-n-1}) and \eqref{prop-2-3}}.
\end{eqnarray*}
Furthermore, the same argument, now using \eqref{small-var-trace-1} and \eqref{small-var-trace--1}, shows that for any $a\in\mathcal F$, $\tau\in\mathrm{T}(Q^r)$, and $t\in[(1+r')/(1+2r'), 1]$,
\begin{equation*}
|\tau({{L'(a)}}((1+2r')t-r')) - (\tau(\Psi(a)(t)) + {{{\rm Ad}w_1\circ\mu_2}}(a))| < 3\sigma/16,
\end{equation*}
and, if $t\in [{r'}/(1+2r'), (1+r')/(1+2r')]$, then
\begin{eqnarray*}
|\tau(L'(a)((1+2r')t-r')) - \tau(L'(a)(t))| & = & | \tau(\Psi (a)((1+2r')t-r') - \Psi(a)(t))| \\
& < & \sigma/64  \quad\quad\quad\quad\textrm{(by \eqref{small-pert-hor})}.
\end{eqnarray*}
Thus,
\begin{equation}\label{small-pert-n-2}
|\tau(L''(a)(t)) - \tau(L'(a)(t))| < 3\sigma/16,\quad a\in\mathcal F,\ \tau\in\mathrm{T}(Q^r),\ t\in[0, 1].
\end{equation}
Hence,
\begin{eqnarray}\label{2nd-pert-trace}
&&|\tau(\pi_t(L''(a))) - \gamma_1(\pi_t^*(\tau))(a)| \nonumber\\
&\leq & |\tau(L''(a)(t)) - \tau(L'(a)(t))| + |\tau(L'(a)(t))- \gamma_1(\pi_t^*(\tau))(a)| \nonumber \\
& < & 3\sigma/16 + 27\sigma/128 =51\sigma/128 \quad\quad \textrm{(by \eqref{small-pert-n-2}, \eqref{pre-realize-trace-2-1}, \eqref{pre-realize-trace-2-2}, and \eqref{pre-realize-trace-2-3})}.
\end{eqnarray}

Note  that  $L''$ is a unital map from $A$ to $B$. It is also $\mathcal F$-$\eps$-multiplicative, since $L'$ is.  

Consider the order isomorphism $\eta': \Q^l\to \Q^l$
defined by
$$
\eta'(y_1,y_2,...,y_l)=(a_1y_1, a_2y_2,...,a_ly_l),\quad (y_1,y_2,...,y_l)\in \Q^l,
$$
where
\begin{equation}\label{scaling-const}
a_j={1\over{{\rm tr}_j(1\oplus \Sigma_1(1_A) \oplus \Sigma_2(1_A))}},\quad  j=1,2,...,l,
\end{equation}
and (as before) $\mathrm{tr_j}$ is the tracial state supported on the $j$th direct summand of $Q^l$.
There exists a unital \hm\, ${\tilde \phi}: (1\oplus \Sigma_1(1_A) \oplus \Sigma_2(1_A))\mathrm{M}_3(Q^l)(1\oplus \Sigma_1(1_A) \oplus \Sigma_2(1_A))
\to Q^l$ such that
\begin{equation}\nonumber
{\tilde \phi}_{*0}=\eta'.
\end{equation}
Therefore, by the constructions of $L''$, $L'$, $L_0$, and $L_1$ (\eqref{defn-new-new-L}, \eqref{defn-new-L}, \eqref{small-var-trace--2}, and \eqref{small-var-trace--1}), we may assume
that
\begin{equation}\label{mt-cond}\psi_0\circ {\tilde \phi}\circ (\Phi'\oplus \Sigma_2)=\pi_0\circ L''\quad
\textrm{and}\quad \psi_1\circ {\tilde \phi}\circ (\Phi'\oplus \Sigma_2)=\pi_1\circ L'',
\end{equation}
replacing $L''$ by {{${\rm Ad}\, v\circ L''$}}  for a suitable unitary {{$v$}} if necessary.

 Define $L: A\to C_1$ by
 $L(a)=(L''(a), {\tilde \phi}( \Phi'(a)\oplus \Sigma_2(a)))$, an element of $C_1$ by \eqref{mt-cond}. Since $L''$ and $\tilde{\phi}\circ(\Phi'\oplus\Sigma_2)$ are unital and ${\mathcal F}$-$\ep$-multiplicative (since $\Phi'$ and $\Sigma_2$ are $\mathcal G$-$\delta$-multiplicative, $\mathcal F\subseteq \mathcal G$, and $\delta\leq\eps$), so also is $L$.


Moreover, for any $a\in\mathcal F$, any $\tau\in\mathrm{T}(Q^r)$, and any $t\in (0, 1)$, it follows from \eqref{2nd-pert-trace} that
\begin{equation}\label{approx-n-1}
|\tau(\pi_t(L(a))) - \gamma_1(\pi_t^*(\tau))(a)|  <  51\sigma/128.
\end{equation}
If $\tau\in\mathrm{T}(Q^l)$, then, for any $a\in\mathcal F$,
\begin{eqnarray*}
&&\hspace{-0.4in} |\tau(\pi_{\mathrm{e}}(L(a))) - \gamma^*(\pi_{\mathrm{e}}^*(\tau))(a)| \\
& = & |\tau( {\tilde \phi}( \Phi'(a)\oplus \Sigma_2(a))) - \gamma^*(\pi_{\mathrm{e}}^*(\tau))(a)| \\
& < & |\tau(\Phi'(a)\oplus \Sigma_2(a)) - \gamma^*(\pi_{\mathrm{e}}^*(\tau))(a)|+\sigma/32 \quad\quad\textrm{(by \eqref{scaling-const}, \eqref{small-trace-n-2} and \eqref{prop-2-3})}\\
& < & |\tau(\Phi'(a)) - \gamma^*(\pi_{\mathrm{e}}^*(\tau))(a)| + 3 \sigma/64  \,\quad \quad\quad\quad\quad\textrm{(by \eqref{prop-2-3})} \\
& < & |\tau(\Phi(a)) - \gamma^*(\pi_{\mathrm{e}}^*(\tau))(a)| +  \sigma/8  \,\, \quad\quad\quad\quad\quad\quad\textrm{(by \eqref{pert-1-1})} \\
& < & \sigma/32 + \sigma/8<51\sigma/128    \quad\quad\quad\quad \quad\quad\quad\quad\quad\quad\textrm{(by \eqref{trace-infty})}.
\end{eqnarray*}
Since each  extreme trace of $C_1$ factors through either the evaluation map $\pi_t$ or the canonical quotient map $\pi_{\mathrm{e}}$,  by \eqref{approx-n-1},
\begin{equation}\label{approx-n-1-1}
|\tau(L(a)) - \gamma^*(\tau)(a)|  <  51\sigma/128,\quad \tau\in \mathrm{T}(C_1),\ a\in\mathcal F.
\end{equation}
Therefore,
for any $a\in {\mathcal F}$ and $\tau\in\mathrm{T}(C)$, we have
\begin{eqnarray*}
 &&\hspace{-0.4in}|\tau(\imath_{1, \infty}(L(a)))-\Gamma_\aff(\hat{a})(\tau)| \\
 &< & |\tau(\imath_{1, \infty}(L(a)))-\gamma^*(\imath_{1, \infty}(\tau))(a)|+|\gamma_*(\imath_{1, \infty}(\tau))(a)-\Gamma_\aff(\hat{a})(\tau)| \\
 &=&|\tau(\imath_{1, \infty}(L(a)))-\gamma^*(\imath_{1, \infty}(\tau))(a)|+|(\imath_{1, \infty})_{\aff}(\hat{a})(\tau)-\Gamma_\aff(\hat{a})(\tau)| \\
 & < & 51\sigma/128 + 77\sigma/128 = \sigma  \quad\quad\quad\quad \textrm{(by \eqref{approx-n-1-1} and \eqref{TT-1})}.
 \end{eqnarray*}

Since $\mathcal F$, $\eps$, and $\sigma$ are arbitrary, in this way we obtain a sequence of unital completely positive maps $H_n: A\to C$ such that
\begin{equation*}
\lim_{n\to\infty}\|H_n(ab)-H_n(a)H_n(b)\|=0,\quad  a,b\in A,
\end{equation*}
and
\begin{equation*}
\lim_{n\to\infty}\sup\{|\tau\circ H_n(a)-\Gamma_{\aff}(a)(\tau)|: \tau\in {\rm T}(C)\}
=0,\quad a\in A.
\end{equation*}
On using again that the given C*-algebra $A$ has finite nuclear dimension, so that also $A\otimes Q$ does, it follows by Lemma 3.4 of \cite{Lncross}---which uses results obtained in \cite{Rb} and \cite{W2}---that $\mathrm{gTR}(A\otimes Q)\leq 1$. This together with $\mathcal Z$-stability of $A$ (which we established at the very beginning of this proof) says that the given algebra $A$ belongs to the class $\mathcal N_1$.
%
\end{proof}

{{Theorem \ref{mainthm-dr} follows from the following corollary.}}

\begin{cor}\label{MC}
Let $A$ be a unital simple separable {{C*-algebra with finite decomposition rank,}} satisfying the UCT.
Then ${\mathrm{gTR}}(A\otimes Q)\le 1.$ In particular, $A$ is classifiable.

\end{cor}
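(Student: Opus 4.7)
The plan is to deduce Corollary \ref{MC} directly from Theorem \ref{TT} by checking its hypotheses. First, finite decomposition rank trivially implies finite nuclear dimension, so that hypothesis of Theorem \ref{TT} is immediate, as is the UCT. Thus the only substantive thing to verify is that $\mathrm{T}(A) = \mathrm{T}_{\mathrm{qd}}(A) \neq \emptyset$.

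Non-emptiness of the trace space is standard: finite decomposition rank is known to imply stable finiteness (Kirchberg--Winter), and a unital, simple, stably finite C*-algebra always admits a tracial state (by Blackadar--Handelman). For $\mathrm{T}(A) = \mathrm{T}_{\mathrm{qd}}(A)$, I would invoke the result, due to Kirchberg and Winter, that finite decomposition rank forces quasidiagonality and indeed that every tracial state on a C*-algebra of finite decomposition rank is quasidiagonal. This follows from the fact that the CPC approximations underlying finite decomposition rank are norm-contractive (not merely contractive on the level of complete positivity), which allows them to be used to approximate traces through almost-multiplicative maps into matrix algebras.

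The elementary case must be handled separately since Theorem \ref{TT} is stated under a non-elementarity hypothesis. If $A$ is elementary then $A \cong \mathrm{M}_n(\mathbb{C})$ for some $n$, and $A\otimes Q \cong \mathrm{M}_n(Q)$ has generalized tracial rank zero; moreover matrix algebras are classifiable by their K-theoretic data. Otherwise, Theorem \ref{TT} applies and gives $\mathrm{gTR}(A\otimes Q) \le 1$ together with $A \in \mathcal{N}_1$, whence classifiability follows from Theorem \ref{Tiso}.

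Since Theorem \ref{TT} is the deep input, the proof of the corollary is essentially a bookkeeping exercise, and the main potential obstacle---the identity $\mathrm{T}(A) = \mathrm{T}_{\mathrm{qd}}(A)$---is circumvented by using only the older Kirchberg--Winter result specific to finite decomposition rank, rather than the full strength of \cite{TWW}. (The latter is referenced elsewhere in the paper only to extend the conclusion to the finite nuclear dimension setting, which is not needed here.)
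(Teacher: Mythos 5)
Your overall strategy---reduce to Theorem \ref{TT} by checking its hypotheses---is exactly the paper's, and the routine points (finite decomposition rank gives finite nuclear dimension, the UCT is assumed, the elementary case is harmless) are fine. Even the non-emptiness of $\mathrm{T}(A)$ can be made to work along your route, though your citation is imprecise: Blackadar--Handelman only produces a quasitrace on a unital stably finite C*-algebra, and one needs nuclearity (Haagerup) to promote it to a tracial state; the paper instead obtains a trace from quasidiagonality of $A$ via Voiculescu.

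The genuine gap is the identification $\mathrm{T}(A)=\mathrm{T}_{\mathrm{qd}}(A)$. You attribute to Kirchberg--Winter the statement that every tracial state on a C*-algebra with finite decomposition rank is quasidiagonal, and assert that it follows from the contractivity of the approximations in the definition of decomposition rank. What Kirchberg--Winter actually prove (5.3 of \cite{KW}) is that the \emph{algebra} is quasidiagonal; quasidiagonality of an individual trace $\tau$ requires, in addition, that the approximately multiplicative u.c.p.\ maps into $Q$ (or matrix algebras) approximately reproduce $\tau$, and the downward maps $\psi_\lambda$ in the decomposition-rank approximations are not approximately multiplicative, so no such trace-by-trace statement comes out of the definition or out of \cite{KW}. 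Quasidiagonality of the algebra only yields that \emph{some} traces (e.g.\ the unique one, when there is a unique trace) are quasidiagonal. The fact that \emph{all} traces on a unital simple separable C*-algebra with finite decomposition rank are quasidiagonal is precisely the deep, recent input the paper invokes, namely 8.5 of \cite{BBSTWW}; it cannot be ``circumvented'' by appeal to an older Kirchberg--Winter result, because no such result exists. With 8.5 of \cite{BBSTWW} cited in place of your claimed easy argument, your proof coincides with the paper's.
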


\begin{proof}
Since $A$  has finite decomposition rank,  $A$ is nuclear (see \ref{DefDr} {{above}}) and
quasidiagonal (5.3 of \cite{KW}).  It follows from 2.4 of \cite{V} that ${\rm T}(A)\not=\O.$
By {{Proposition}}
 8.5 of \cite{BBSTWW},
 ${\rm T}(A)={\rm T}_{{\rm qd}}(A).$
Now the corollary follows from Theorem \ref{TT} together with Theorem \ref{Tiso}.

\end{proof}

\begin{rem}\label{Rrtr=1}
We would like to state the following special case of  Corollary \ref{MC}. Let
$A$ be as in \ref{MC}.  Suppose that
\beq\label{Leq}
&&\hspace{-0.65in}(K_0(A\otimes Q), K_0(A\otimes Q)_+, [1_{A\otimes Q}]_0, {\mathrm T}(A\otimes Q), r_{A\otimes Q})
\cong (K_0(C), K_0(C)_+, [1_C]_0, {\mathrm T}(C), r_C)
\eneq
for some  unital simple ${\mathrm{A}}\T$-algebra $C.$ {T}hen ${\mathrm{TR}}(A\otimes Q)\le 1.$ {If this holds}
for some   unital AF-algebra $C,$ then ${\mathrm{TR}}(A\otimes Q)=0.$

To see this we  note that, in the beginning of the proof of  Theorem \ref{TT},
we assume that $A=A\otimes Q.$ If $C$ in \eqref{Leq} can be chosen to be a unital simple ${\mathrm A}\T$-algebra,
then the end of the proof shows that $A\otimes Q$ has tracial rank at most one. {In the same way one sees that} if $C$ in \eqref{Leq} can be chosen to be a unital simple AF-algebra, then $A\otimes Q$ has tracial rank zero.

The preceding (abstract) classification result {(Corollary \ref{MC})} depends on (by reducing to) the recent (semi-abstract) classification result of
\cite{GLNI} and \cite{GLNII}. In fact, there is a more restricted, but still very interesting, setting
in which a correspondingly restricted abstract result can be established by reducing to a much earlier
result.

Let $A$ be a unital simple separable {C*-algebra}, satisfying the hypotheses
of the preceding corollary (or theorem).
Suppose in addition that  {$\mathrm{S}_{[1_A]}(\mathrm{K}_0(A)),$} the state space of {$\mathrm{K}_0(A),$}   is
a Choquet simplex, and that the map $r_A: {\mathrm{T}(A)\to \mathrm{S}_{[1_A]}(\mathrm{K}_0(A))}$
takes extreme points to extreme points.
Without using \cite{GLNI} or \cite{GLNII},
the proof of
the {present} Theorem \ref{TT},
above, shows
that $A$ is classifiable.

Indeed, by \cite{LN2}, there is a unital simple separable {C*-algebra}, $B$,  satisfying
the UCT, such that
$B\otimes Q$ has tracial rank at most one (in the sense of \cite{LinTAI}), and  such that
${\rm Ell}(A)={\rm Ell}(B).$  Since ${\mathrm{K}}_i(B\otimes Q)$ is torsion free,
by the classification of {C*-algebras} of tracial rank at most one (see \cite{LinTAI} and \cite{EGL}),
$C=B\otimes Q$ is an inductive limit of circle algebras ({i.e., is $\mathrm{A}\mathbb T$}).
By the first paragraph of this remark,
$A\otimes Q$ has tracial rank at most one.  {Hence by} {{Corollary 11.9  of \cite{Lininv}
(see also \cite{LS}),}}
$A$ is classifiable.

In particular, the Jiang-Su algebra ${\cal Z}$ is the only unital  separable simple amenable
C*-algebra  {{in the UCT class that}} has the same Elliott invariant as that of $\C.$
The proof just {{given}} of this statement does not rely on \cite{GLNI} or \cite{GLNII}.
\end{rem}

\begin{rem}\label{Refiniterank}
It was shown in \cite{ENST} that any unital simple separable Jiang-Su stable approximately subhomogeneous C*-algebra has decomposition rank at most two.   Therefore, it follows from Corollary \ref{MC} that such a C*-algebra is classifiable.
This in particular recovers the classification theorem of \cite{EGLN}.
Moreover, by \cite{GLNII} together with the result in \cite{ENST} mentioned above, every unital simple {C*-algebra},
belonging to the class ${\cal N}_1$ has finite decomposition rank.
\end{rem}

\begin{rem}
The special case of Corollary \ref{MC} for C*-algebras
for which K$_0$ separates traces, e.g. the case of unique trace, is known.
(See Corollary 5.2 of \cite{Wdecomp} and Theorem 5.4 of \cite{LN}.)
\end{rem}

Theorem \ref{TT} and Corollary \ref{MC} can also be combined and stated as follows:

\begin{thm}\label{T2}
Let $A$ be a unital  simple separable amenable (non-zero) C*-algebra  which satisfies the UCT.
Then the following  properties
are equivalent:

{\rm (1)} ${\mathrm{gTR}}(A\otimes Q)\le 1;$

{\rm (2)} $A\otimes Q$ has finite nuclear dimension and ${\rm T}(A\otimes Q)={\rm T}_{{\rm qd}}(A\otimes Q)\not=\O;$

{\rm (3)} the decomposition rank of
$A\otimes Q$
is finite.
\end{thm}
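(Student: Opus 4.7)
The plan is to establish the cycle $(2) \Rightarrow (1) \Rightarrow (3) \Rightarrow (2)$, with the heavy lifting handled by Theorem \ref{TT} and the known implications between decomposition rank, nuclear dimension, quasidiagonality, and the structure of $\mathcal{N}_1$. Throughout, I use that $A\otimes Q$ inherits the UCT from $A$, and that $(A\otimes Q)\otimes Q\cong A\otimes Q$ since $Q$ is strongly self-absorbing.

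For $(2) \Rightarrow (1)$, this is an essentially immediate application of Theorem \ref{TT} to the C*-algebra $B:=A\otimes Q$ in place of $A$. The algebra $B$ is unital, simple, separable, has finite nuclear dimension (hence is amenable), satisfies the UCT, and by hypothesis has $\mathrm{T}(B)=\mathrm{T}_{\mathrm{qd}}(B)\neq\emptyset$. Theorem \ref{TT} then yields $\mathrm{gTR}(B\otimes Q)\le 1$, and since $B\otimes Q\cong B=A\otimes Q$, we obtain (1).

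For $(1) \Rightarrow (3)$, if $\mathrm{gTR}(A\otimes Q)\le 1$, then, as noted at the end of Definition \ref{DgTR}, $A\otimes Q$ is quasidiagonal and (being amenable) $\mathcal{Z}$-stable. Together with the UCT this places $A\otimes Q$ in the class $\mathcal{N}_1$ (Definition \ref{Dnotation-1}). As recalled in the introduction, every C*-algebra in $\mathcal{N}_1$ is isomorphic to an inductive limit of subhomogeneous C*-algebras with no dimension growth (by the structural results of \cite{GLN}), and such inductive limits have finite decomposition rank (by Winter's subhomogeneous bound, invoked via \cite{W}). This gives (3).

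For $(3) \Rightarrow (2)$, the argument is the one already used in the proof of Corollary \ref{MC}, applied to $A\otimes Q$: finite decomposition rank trivially implies finite nuclear dimension (Definition \ref{DefDr}); by 5.3 of \cite{KW}, $A\otimes Q$ is quasidiagonal; by 2.4 of \cite{V}, quasidiagonality of a unital (non-zero) simple C*-algebra forces $\mathrm{T}(A\otimes Q)\neq\emptyset$; and by 8.5 of \cite{BBSTWW}, every trace on a nuclear, separable, UCT C*-algebra with finite decomposition rank is quasidiagonal, so $\mathrm{T}(A\otimes Q)=\mathrm{T}_{\mathrm{qd}}(A\otimes Q)$.

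The only non-formal step is $(2) \Rightarrow (1)$, and that is precisely Theorem \ref{TT}, which is the main technical result of the paper; the other two directions are just bookkeeping. So the main obstacle has already been overcome, and the proof of this theorem amounts to packaging Theorem \ref{TT}, Corollary \ref{MC}, and the definition of $\mathcal{N}_1$ into a three-line cycle of implications.
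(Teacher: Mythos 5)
Your proposal is correct and matches the paper's intent: the paper gives no separate proof of this theorem, presenting it precisely as the combination of Theorem \ref{TT} (your $(2)\Rightarrow(1)$), the argument of Corollary \ref{MC} via \cite{KW}, \cite{V}, and \cite{BBSTWW} (your $(3)\Rightarrow(2)$), and the fact that algebras in ${\mathcal N}_1$ are inductive limits of subhomogeneous algebras with no dimension growth and hence have finite decomposition rank (your $(1)\Rightarrow(3)$). The only cosmetic point is that the finite decomposition rank of such inductive limits rests on the structure results of \cite{GLN} (and the subhomogeneous decomposition rank bound, cf.\ \cite{ENST}) rather than on \cite{W} specifically.
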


Given
the  fact that
every tracial state of  a unital simple separable C*-algebra with finite decomposition rank
is quasidiagonal ({{Proposition}} 8.5 of \cite{BBSTWW}),
it is reasonable to expect that  every tracial state of  a finite unital   simple separable  C*-algebra with finite
nuclear dimension is also quasidiagonal.
 Indeed,
 shortly after the present paper was first announced (and posted on arXiv), Tikuisis, White, and Winter proved that, in fact,
every tracial state on a unital simple separable amenable C*-algebra which satisfies the UCT is quasidiagonal (Theorem A of \cite{TWW}).
Therefore, we have  the  following statement:

\begin{thm}\label{NT}
Let $A$ be a finite unital  simple separable   C*-algebra  with finite nuclear dimension which satisfies the UCT.  Then ${\mathrm{gTR}}(A\otimes Q)\le 1$. In particular, $A$ is classifiable (and is approximately subhomogeneous (ASH)---see
Theorems \ref{Tiso} and \ref{Trange}).
\end{thm}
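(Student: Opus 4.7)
The plan is to show that Theorem \ref{NT} is essentially an immediate combination of Theorem \ref{TT} of the present paper with the quasidiagonality theorem of Tikuisis, White, and Winter (\cite{TWW}). The key observation is that Theorem \ref{TT} already does all the heavy lifting, assuming the trace space is non-empty and consists entirely of quasidiagonal tracial states. So my task reduces to verifying these two hypotheses for $A$ as in the statement.

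First, I would note that finite nuclear dimension implies nuclearity (see \ref{DefDr}), so in particular $A$ is amenable. Second, I need ${\rm T}(A)\neq\emptyset$. Since $A$ is unital, simple, finite, and exact, any non-zero quasitrace yields a tracial state by Haagerup's theorem (nuclear C*-algebras have no non-trivial quasitraces), while the existence of a quasitrace on a unital stably finite C*-algebra follows from Blackadar-Handelman / Haagerup. (Note that finite plus $\mathcal Z$-stability, which follows from finite nuclear dimension by Winter's theorem, guarantees stable finiteness, so in fact $A\otimes Q$ is finite as well.) Third, by the main theorem of \cite{TWW}, every tracial state on a unital simple separable amenable C*-algebra satisfying the UCT is quasidiagonal, so ${\rm T}(A)={\rm T}_{\rm qd}(A)\neq \emptyset$.

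With these verifications, the hypotheses of Theorem \ref{TT} are all in place, and that theorem directly yields ${\mathrm{gTR}}(A\otimes Q)\le 1$. The final classifiability claim then follows from Theorem \ref{Tiso} in the non-elementary case, upon noting that $A\otimes \mathcal Z \cong A$ (again by finite nuclear dimension together with Winter's $\mathcal Z$-stability theorem) and the UCT passes to $A\otimes Q$, placing $A$ in ${\mathcal N}_1$. The elementary case is trivial, as those C*-algebras are matrix algebras.

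There is essentially no obstacle here beyond correctly citing the inputs; all the hard work is packaged in Theorem \ref{TT} (our main technical result) and in \cite{TWW}. The only mild subtlety is confirming ${\rm T}(A)\neq\emptyset$ for a finite unital simple nuclear C*-algebra, which I would handle by the Haagerup/Blackadar-Handelman route outlined above rather than invoking quasidiagonality directly (as was done in the proof of Corollary \ref{MC}, where finite decomposition rank furnished quasidiagonality via 5.3 of \cite{KW} and hence a trace via 2.4 of \cite{V}); under only finite nuclear dimension we do not automatically have quasidiagonality of $A$, but we still have stable finiteness and nuclearity, which together suffice for trace existence.
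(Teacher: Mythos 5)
Your proposal is correct and follows essentially the same route as the paper, which obtains Theorem \ref{NT} precisely by combining Theorem \ref{TT} with the quasidiagonality theorem of \cite{TWW}. Your extra care in verifying ${\rm T}(A)\neq\emptyset$ (stable finiteness via $\mathcal Z$-stability, a quasitrace by Blackadar--Handelman, and Haagerup's theorem to upgrade it to a trace) is a sound filling-in of a detail the paper leaves implicit, not a different method.
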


\begin{rem}\label{Lrem}

It was established by Kirchberg and Phillips  (\cite{KP} and \cite{P}) that purely infinite unital
 simple separable  amenable C*-algebras
which satisfy the UCT are classifiable.  It has been shown that these C*-algebras have finite nuclear dimension (see\,\cite{MS}). It is also known that
every unital simple separable C*-algebra with finite nuclear dimension is either finite or purely infinite
(see \cite{GJS} and \cite{W}).
Therefore, Theorem \ref{NT} can now  be combined with \cite{KP} and \cite{P} to obtain the following overall statement.


\end{rem}

\begin{cor}\label{KP}
The class of all unital simple separable (non-elementary) C*-algebras with finite nuclear dimension
which satisfy the UCT is classifiable by the Elliott invariant.
\end{cor}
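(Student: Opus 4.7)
The plan is to reduce to two disjoint subclasses that have each already been classified in the literature, and then assemble the two classifications into a single statement. First, by the dichotomy noted in Remark \ref{Lrem}---due to \cite{GJS} and \cite{W}---every unital simple separable C*-algebra of finite nuclear dimension is either finite (hence stably finite, in the simple unital case) or purely infinite. This splits the class under consideration into two subclasses that are a priori disjoint at the level of Elliott invariants: a stably finite unital simple C*-algebra has nonempty tracial state space, while a purely infinite simple C*-algebra has empty tracial state space.

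For the stably finite subclass, the conclusion is exactly Theorem \ref{NT}: such an $A$ satisfies $\mathrm{gTR}(A\otimes Q)\le 1$, belongs to $\mathcal{N}_1$, and is therefore classified by the Elliott invariant via Theorem \ref{Tiso}. For the purely infinite subclass, the Kirchberg--Phillips theorem (\cite{KP}, \cite{P}) gives classification by $(\Kzero(A), [1_A]_0, \Kone(A))$, which for purely infinite simple C*-algebras is equivalent to classification by the full Elliott invariant: the tracial simplex is empty, so the pairing $r_A$ is vacuous, and the order on $\Kzero$ carries no additional information since every nonzero projection is properly infinite.

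The main step---really a subtlety rather than an obstacle---is to phrase the two classifications uniformly so that they combine into a single isomorphism theorem for the union. Because the two subclasses are disjoint and the Elliott invariant detects which subclass a given algebra lies in (by whether $\mathrm{T}(A)$ is empty or not), any isomorphism of Elliott invariants between two members of the combined class automatically stays inside one of the two subclasses, and the relevant classification theorem then lifts it to a C*-algebra isomorphism. No additional calculation or new ingredient is needed: the content is entirely in Theorem \ref{NT} plus Theorem \ref{Tiso} for the finite side, in \cite{KP} and \cite{P} for the infinite side, and in the dichotomy \cite{GJS}, \cite{W} which partitions the class.
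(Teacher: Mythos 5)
Your proposal is correct and follows essentially the same route as the paper: Remark \ref{Lrem} invokes the finite/purely infinite dichotomy of \cite{GJS} and \cite{W}, handles the finite case by Theorem \ref{NT} (hence Theorem \ref{Tiso}), and the purely infinite case by Kirchberg--Phillips \cite{KP}, \cite{P}. Your extra observation that the Elliott invariant itself (via emptiness or not of ${\rm T}(A)$) separates the two subclasses is the same implicit point that makes the combined statement well posed.
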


{\bf Added {November 2}, 2021}

{This} paper was originally posted on arXiv in late 2015. Since then, there have been some new developments.
Notably, in \cite{CETWW} (also \cite{MS2} and \cite{SWW}), it was shown
that every Jiang-Su  stable (${\cal Z}$-stable) unital simple separable amenable C*-algebra  has finite nuclear dimension.
This yields  many examples  which come {under}  the aegis of Corollary \ref{KP}.  For instance,  since
by \cite{JS}, the Jiang-Su algebra, ${\cal Z},$ is itself ${\cal Z}$-stable,  the tensor product of any C*-algebra $A$ with ${\cal Z}$
is ${\cal Z}$-stable, and so by \cite{CETWW}, if $A$ is a unital simple  separable amenable C*-algebra, and satisfies the UCT,
then $A\otimes {\cal Z}$ is covered by \ref{KP}.  Sometimes, it can be established  without tensoring by ${\cal Z}$
that a {{C*-algebra}} is Jiang-Su stable.  For example,  this was achieved in \cite{EN-m} for a simple C*-algebra arising
from a  minimal homeomorphism  of an infinite compact metrizable space  of mean dimension zero
 (which
includes the cases that the space is finite dimensional, or has a unique invariant measure).

 \vskip 4mm
 Department of Mathematics, University of Toronto, Toronto, Ontario, Canada\ \  M5S 2E4 \par
 {\small {\it E-mail address:} \tt{elliott@math.toronto.edu}}

 \vskip 4mm
 Department of Mathematics, University of Puerto Rico, San Juan, PR 00936, USA \par
 {\small {\it E-mail address:} \tt{ghgong@gmail.com}}

 \vskip 4mm
 Department of Mathematics, University of Oregon, Eugene, OR 97403, USA \par
 {\small {\it E-mail address:} \tt{hlin@uoregon.edu}}

 \vskip 4mm
 Department of Mathematics, University of Wyoming, Laramie, WY 82071, USA \par
 {\small {\it E-mail address:} \tt{zniu@uwyo.edu}}

\end{document}